\theoremstyle{plain}
\newcounter{theoremintro} 
\newtheorem{introtheorem}[theoremintro]{Theorem}
\newtheorem*{definition*}{Definition}
\newtheorem*{lemma*}{Lemma}
\newtheorem*{proposition*}{Proposition} 
\newtheorem*{theorem*}{Theorem} 
\newtheorem*{corollary*}{Corollary} 
\newtheorem*{conjecture*}{Conjecture}
\newtheorem{lemma}[subsection]{Lemma}
\newtheorem{proposition}[subsection]{Proposition}
\newtheorem{theorem}[subsection]{Theorem} 
\newtheorem{corollary}[subsection]{Corollary}
\newtheorem{remark}[subsection]{Remark}
\numberwithin{equation}{section}
\newcommand{\SO}{\mathrm{SO}}
\newcommand{\SU}{\mathrm{SU}}
\newcommand{\Sp}{\mathrm{Sp}}
\newcommand{\F}{\mathbb{F}}
\newcommand{\GL}{\mathrm{GL}}
\newcommand{\N}{\mathbb{N}}
\newcommand{\Ha}{\mathbb{H}}
\newcommand{\R}{\mathbb{R}}
\newcommand{\bC}{\mathbb{C}}
\newcommand{\op}{\mathrm{op}}
\newcommand{\s}[1]{\langle #1 \rangle}
\newcommand{\norm}[1]{\lVert #1 \rVert}
\newcommand{\hill}{\mathcal{H}}
\begin{document} 

\title[$\mathrm{Sp}(n ,1)$ admits a proper $1$-cocycle for a u.b. rep.]{$\mathrm{Sp}(n ,1)$ admits a proper 1-cocycle for a uniformly bounded representation}

\author[S. Nishikawa]{Shintaro Nishikawa}

\address{S.N.: Mathematisches Institut, Fachbereich Mathematik und Informatik der Universit\"at M\"unster, Einsteinstrasse 62, 48149 M\"unster, Germany.}
 
\email{snishika@uni-muenster.de}

\thanks{This work is partly supported by the Deutsche Forschungsgemeinschaft (DFG, German Research Foundation) under Germany's Excellence Strategy EXC 2044-390685587, Mathematics M\"unster: Dynamics-Geometry-Structure.}

\date{\today}

\subjclass[2010]{Primary 020, 022; Secondary 042}
\keywords{Sp(n, 1), real rank-one simple Lie groups, proper 1-cocycle, uniformly bounded representations, Shalom's conjecture}

\maketitle

\begin{abstract}
We verify Shalom's conjecture for the simple real-rank-one Lie group $\Sp(n ,1)$ for any $n$: i.e. we show that it admits a metrically proper affine action on a Hilbert space whose linear part is a uniformly bounded representation. We provide two different proofs. Both approaches crucially use results on uniformly bounded representations by Michael Cowling. The first approach is quite abstract: it uses an automatic-properness result of Shalom and requires almost no computations. The second approach is explicit: we deduce the properness of  cocycles from the non-continuity of a critical case of the Sobolev embedding. This work is inspired from Pierre Julg's work on the Baum--Connes conjecture for $\Sp(n,1)$.
\end{abstract}


\section{Introduction}
Let $G$ be a topological group. Recall that the first group cohomology group $H^1(G, \pi)=Z^1(G, \pi)/B^1(G, \pi)$ for a continuous linear representation $\pi$ on a topological vector space $V_\pi$ is defined as the vector space $Z^1(G, \pi)$ of $1$-cocycles on $V_\pi$ modulo the vector space $B^1(G, \pi)$ of co-boundaries. Here, a $1$-cocycle is a continuous function $b\colon G\mapsto V_\pi$ such that $b_{gh}=\pi(g)b_h+ b_g$ for $g, h$ in $G$. A $1$-cocycle of the form $b_g=v-\pi(g)v$ for $v\in V_\pi$ is called a co-boundary. 

If $\pi$ is an isometric representation of $G$ on a normed vector space $(V_\pi, \lVert \,\rVert_{V_\pi})$, a $1$-cocycle $b_g$ corresponds to an affine isometric action $v\mapsto \pi(g)v + b_g$ of $G$ on $V_\pi$. A $1$-cocycle $b_g$ is proper if $||b_g||_{V_\pi}\to \infty$ as $g\to \infty$ and this corresponds to a metrically proper affine isometric action. 

A second countable, locally compact group is called a-T-menable if it admits a proper $1$-cocycle for a unitary representation on a Hilbert space, that is if it admits a metrically proper affine action on a Hilbert space. As explained in \cite{CCJJV2001}, a-$T$-menability is equivalent to the Haagerup approximation property. A non-compact group with Kazhdan's property (T) cannot be a-T-menable because any affine isometric action of such a group has a fixed point.

Pierre Julg showed that the simple real-rank-one Lie groups $\SO_0(n, 1)$ ($n\geq2$) and $\SU(n,1)$ ($n\geq2$) are a-T-menable (see\cite[Section 1.4]{Julg1998}, \cite[Chapter 3]{CCJJV2001}). On the other hand, as he noted, the other simple real-rank-one Lie groups $\Sp(n,1)$ ($n\geq2$) and $F_{4(-20)}$ cannot admit  a proper $1$-cocycle because they  have Kazhdan's property (T) (\cite{Kostant}).

According to \cite[Section 3.9]{Nowak2015}, it is an unpublished result of Yehuda Shalom that the group $\Sp(n, 1)$ has a uniformly bounded representation $\pi$ on a Hilbert space in which there is a non-trivial $1$-cocycle: i.e. $H^1(G, \pi)\neq0$. Here, the term ``uniformly bounded'' means that there is $C>0$ such that the operator norm $||\pi(g)||$ of $\pi(g)$ is bounded by constant $C$ for all $g\in G$. However, it appears that this fact has not been proven explicitly in the literature. 
He also gave the following conjecture, which is now known as Shalom's conjecture:

\begin{conjecture*}(Yehuda Shalom, \cite[Conjecture 35]{Nowak2015}) Let $\Gamma$ be a non-elementary hyperbolic group. There exists a uniformly bounded representation $\pi$ of $\Gamma$ on a Hilbert space, for which $H^1(\Gamma, \pi)\neq 0$ and for which there exists a proper cocycle.
\end{conjecture*}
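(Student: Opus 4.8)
Since the final statement is a conjecture rather than a theorem the paper establishes, what follows is a research strategy rather than a complete argument; the plan is to transport the two ingredients of Theorem A — a Julg-type boundary cocycle and a Cowling-type family of uniformly bounded representations — from the symmetric-space setting to a purely coarse-geometric one living on the Gromov boundary $\partial\Gamma$.

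First I would replace the homogeneous boundary $G/P$ by the Gromov boundary $\partial\Gamma$ equipped with a Patterson--Sullivan measure class. Fixing a word metric on $\Gamma$ with critical exponent $D$, one obtains for each basepoint $x$ a conformal density $\nu_x$ of dimension $D$ on $\partial\Gamma$, whose transformation rule is governed, up to the usual sign conventions, by the Busemann (horofunction) cocycle $\beta$ through
\[
\frac{d\nu_{gx}}{d\nu_x}(\xi)=e^{-D\,\beta_\xi(gx,x)}.
\]
The role of ``top-degree forms with zero integral'' is then played by the space of finite signed measures on $\partial\Gamma$ of total mass zero, and the candidate boundary cocycle is the Julg-type difference $c(x,y)=\nu_x-\nu_y$, which manifestly satisfies the additive identity $c(x,y)+c(y,z)=c(x,z)$ and is $\Gamma$-equivariant for the push-forward action $\pi(g)\mu=g_*\mu$ once the densities are normalized consistently.

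Next I would introduce a one-parameter deformation $\pi_s$, with $s$ near $D$, obtained by twisting the quasi-regular representation on $L^2(\partial\Gamma,\nu)$ by the $s$-th power of the Radon--Nikodym cocycle — the intrinsic analogue of the (complementary) principal series. The inner product on the zero-mass measures would be given by a kernel $\langle\mu,\nu\rangle=\iint k_s(\xi,\eta)\,d\mu\,d\nu$ built from a negative power $e^{-s\,(\xi|\eta)}$ of the Gromov product, and the key analytic input — the substitute for Cowling's theorem — is the boundedness, uniformly in $g\in\Gamma$, of the associated Knapp--Stein intertwining operators between $\pi_s$ and $\pi_{D-s}$. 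Exactly as in the Lie-group case, properness of $c$ for the resulting Euclidean norm should be the divergence $\|\nu_x-\nu_y\|\to\infty$ forced by the non-continuity of the critical Sobolev embedding at dimension $D$, so that the geometric properness $d(x,y)\to\infty$ translates into norm-properness and yields a nonzero class in $H^1(\Gamma,\pi_s)$.

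The hard part will be establishing genuine uniform boundedness of $\pi_s$ without the Lie-theoretic structure underlying Cowling's analytic continuation. On a symmetric space the visual metric and the conformal action are exactly homogeneous; on a general hyperbolic group the visual metric is canonical only up to H\"older equivalence and the boundary action is merely quasi-conformal, which is precisely what obstructs the operator-norm estimates for the intertwiners. I expect this obstacle to be surmountable by first passing to a strongly hyperbolic metric in the sense of Mineyev (or Nica--\v{S}pakula), which makes $\beta$ genuinely additive and the kernel $e^{-s(\xi|\eta)}$ positive-definite for $s$ in a controlled range; the remaining work is then to show that Schur-type bounds on $k_s$ survive the failure of exact conformality and produce a $g$-independent estimate $\|\pi_s(g)\|\le C$. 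Verifying that this admissible range of $s$ actually reaches the critical value $D$ — where boundedness degenerates and properness appears — is the crux, and is exactly where the existence of property-(T) hyperbolic groups makes any unitary construction impossible and renders the uniformly bounded framework indispensable.
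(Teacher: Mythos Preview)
You correctly identify that this is a conjecture, not a theorem, and the paper does not attempt to prove it in general. The paper's only contribution toward the conjecture is the sentence immediately following it: Theorem~A verifies the conjecture for lattices $\Gamma$ of $\Sp(n,1)$, simply by restricting the proper cocycle and uniformly bounded representation constructed on the ambient Lie group $G$ to the lattice. There is no argument in the paper for a general non-elementary hyperbolic group, so there is no ``paper's own proof'' to compare your proposal against.

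Your outline is a reasonable research programme and is in the spirit of the paper: replace $G/P$ by the Gromov boundary, the visual measures by Patterson--Sullivan densities, the Knapp--Stein intertwiners by kernels built from the Gromov product, and Cowling's Sobolev estimates by Schur-type bounds in a Mineyev metric. You also correctly locate the essential obstruction: the paper's uniform boundedness comes entirely from Cowling's theorem (Corollary~\ref{cor_compact}), which in turn rests on the exact homogeneity of the sub-Laplacian on the Iwasawa $N$ and on hard analytic continuation of intertwining operators. None of this machinery is available on a general $\partial\Gamma$, where the visual quasi-metric is only H\"older-canonical and the action is merely quasi-M\"obius. Your hope that passing to a strongly hyperbolic metric and proving Schur bounds will recover a $g$-independent operator norm at the \emph{critical} parameter $s=D$ is precisely the open problem; for $s$ strictly below critical one typically gets uniform boundedness but not properness, and at criticality the known estimates degenerate. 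So your proposal is an honest description of what one would have to do, not a proof, and the gap you yourself flag---pushing the admissible range of $s$ all the way to $D$---remains the decisive unsolved step.
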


Our main result verifies his conjecture for $\Sp(n, 1)$ for any $n$. Let $G$ be any one of $\SO_0(n, 1)$ ($n\geq2$), $\SU(n,1)$ ($n\geq2$) and $\Sp(n,1)$. Let $P$ be a minimal parabolic subgroup of $G$. We denote by $\Omega^{\mathrm{top}}_{\int=0}(G/P)$, the subspace of the vector space $\Omega^{\mathrm{top}}(G/P)$ of top-degree forms on $G/P$ with zero integral.

\begin{introtheorem}\label{introthm-Shalom-conj} (Theorem \ref{thm-Shalom-conj}) Let $G$ be any one of simple real-rank-one Lie groups $\SO_0(n, 1)$ ($n\geq2$), $\SU(n,1)$ ($n\geq2$) and $\Sp(n,1)$ ($n\geq2$). Let $\pi$ be the natural representation of $G$ on $W_0=\Omega^{\mathrm{top}}_{\int=0}(G/P)$. Then, there is a Euclidean norm $\norm{\,}_{W_0}$ on $W_0$ for which $\pi$ is continuous and uniformly bounded. The $1$-cocycle $b$ in $\pi$ associated to the extension 
\begin{equation}\label{eq-ext-intro}
0 \to \Omega^{\mathrm{top}}_{\int=0}(G/P) \to \Omega^{\mathrm{top}}(G/P) \to \mathbb{C} \to 0
\end{equation}
is proper with respect to the norm $\norm{\,}_{W_0}$. In particular, if $\Gamma$ is any non-compact closed subgroup inside the Lie groups $\SO_0(n, 1)$ ($n\geq2$), $\SU(n,1)$ ($n\geq2$), $\Sp(n,1)$ ($n\geq2$) and their products, $\Gamma$ admits a proper $1$-cocycle in a uniformly bounded representation on a Hilbert space.
\end{introtheorem}

The proof we present in Section \ref{sec-abs-proof} is quite abstract, and it requires almost no computations. The following is a brief sketch of the proof: On one hand, we note that the extension \eqref{eq-ext-intro} defines a non-zero class $[b]$ in the first group cohomology $H^1(G, (\pi, W_0))$. Moreover, we note that the class $[b]$ remains non-zero in $H^1(G, (\pi, \overline{W_0}))$ for any Banach space completion $\overline{W_0}$ of $W_0$ for which $\pi$ is continuous (Lemma \ref{lem-nonzero}). On the other hand, we note that such a non-trivial $1$-cocycle $b$ is automatically proper when $\overline{W_0}$ is a reflexible Banach space (Proposition \ref{prop-criteria}). This is a simple consequence of, a generalization of, Shalom's result (\cite[Theorem 3.4]{Shalom2000}) which says that for a connected, simple real-rank-one Lie group $G$ with finite center, a $1$-cocycle $b$ in an isometric representation on a reflexible Banach space is proper if and only if it is not a co-boundary (Proposition \ref{prop-Shalom}).  This reduces our problem to finding a Euclidean norm $\norm{\,}_{W_0}$ on $W_0$ for which $\pi$ is continuous and uniformly bounded. We prove this by applying a deep result of Michael Cowling on uniformly bounded representations of simple real-rank-one Lie groups. The last half of Section \ref{sec-abs-proof} is devoted for explaining how we use his result to obtain such a norm $\norm{\,}_{W_0}$ on $W_0$ but the discussion is quite elementary.

In Section \ref{sec-exp-proof}, after some preliminaries in Section \ref{sec-prelim}, we provide another proof of Theorem \ref{introthm-Shalom-conj}, which does not use the automatic-properness result of Shalom. This means that we explicitly compute the norm of the $1$-cocycle $b$ with respect to the norm $\norm{\,}_{W_0}$ which we obtained as an application of Cowling's result. An interesting new feature is that we deduce the properness of the $1$-cocycle $b$ from the non-continuity of the Sobolev embedding at the critical degree.

In Section \ref{sec-Busemann}, we obtain another proper $1$-cocycle in a different uniformly bounded representation. We consider the Busemann cocycle (see \cite[Section 3.1]{CCJJV2001}):
 \[
\gamma_{x,y}(z) = \beta_z(x, y)= \lim_{z'\to z}\bigg{(} d_{Z}(z', y) - d_{Z}(z', x) \bigg{)} 
 \]
for $x, y$ in the symmetric space $Z=G/K$ and for $z\in G/P$. The function $\gamma\colon Z\times Z\to C^\infty(G/P)$ is $G$-equivariant and satisfies a cocycle relation: $c(x_0, x_1)+  c(x_1, x_2)=c(x_0, x_2)$. We may think this as a cocycle in the quotient space $C^\infty(G/P)/\bC1_{G/P}$ of $C^\infty(G/P)$ by the constant functions (this should best be viewed as the derivative of the Busemann cocycle).

\begin{introtheorem}(Theorem \ref{thm-Busemann})  Let $G$ be any one of $\SO_0(n, 1)$ $n\geq2$, $\SU(n,1)$ for $n\geq2$, and $\Sp(n,1)$ for $n\geq2$. Let $\pi_0$ be the natural representation of $G$ on $C^\infty(G/P)/\mathbb{C}1_{G/P}$. Then, there is a Euclidean norm $\lVert\,\,\rVert$ on $C^\infty(G/P)/\mathbb{C}1_{G/P}$ for which $\pi_0$ is continuous and uniformly bounded and for which the Busemann cocycle $\gamma$ is continuous and proper in sense that
\[
\norm{\gamma(x,y)} \to +\infty \,\,\, \text{as} \,\,\, d_Z(x,y)\to +\infty.
\]
\end{introtheorem}
We note that we can obtain a proper group $1$-cocycle $b$ in the uniformly bounded representation $\pi_0$ from $\gamma$ by defining $b_g=\gamma(gx_0, x_0) \in C^\infty(G/P)/\mathbb{C}1_{G/P}$. Here, unlike Theorem \ref{introthm-Shalom-conj}, the proof of this requires an explicit computation. The automatic-properness result is not applicable here because it is not a priori evident that this cocycle is not a co-boundary.

\section*{Acknowledgements} 
I would like to thank Michael Cowling, Cornelia Drutu,  Nigel Higson, Pierre Julg, Tim de Laat, Mikael de la Salle and Alain Valette for helpful comments.  
 
 \section{A proof of Shalom's conjecture for $\Sp(n, 1)$}\label{sec-abs-proof}

In this section, we provide a proof of Shalom's conjecture for $\Sp(n, 1)$. 

Let $G$ be a topological group. A continuous representation of $G$ on a topological vector space $V$ (over $\R$ or $\bC$) is a linear representation $\pi\colon G\to \GL(V)$ on $V$ such that the orbit map $G\times V \ni (g, v) \mapsto \pi(g)v \in V$ is continuous.  Note that for any continuous representation $\pi$ of $G$ on a normed vector space $(V, \norm{\,})$, $\pi(g)$ is always bounded\footnote{We avoid the term ``a bounded representation'' which sometimes refers to a Hilbert space representation whose matrix coefficients are bounded.}: for any $g\in G$, there is a constant bound $C=C(g)>0$ for the operator norm of $\pi(g)$: $\norm{\pi(g)}_\op \leq C$. It is called uniformly bounded if there is a constant $C>0$ such that $\norm{\pi(g)}_\op \leq C$ holds for all $g$ in $G$. Isometric representations are uniformly bounded with constant $C=1$. Any representation of a compact group on  a normed vector space $(V, \norm{\,})$ is uniformly bounded.

For any (continuous) representation $\pi$ of $G$ on $V$, a group $1$-cocycle for $G$ in $\pi$, or in $V$, is a continuous function $b\colon G\to V$ such that $b_{gh} = \pi(g)b_{h} + b_{g}$ for any $g, h$ in $G$. Such a $1$-cocycle $b$ is a co-boundary if $b_g= v - \pi(g)v$ for some vector $v\in V$. The first group cohomology group $H^1(G, \pi)$ is a vector space of $1$-cocycles of $G$ in $V$ modulo co-boundaries. Thus, $H^1(G, \pi)=0$ if and only if every $1$-cocycle in $V$ is a co-boundary.  Suppose $V$ is a normed vector space $(V, \norm{\,})$. A $1$-cocycle $b$ for $G$ in $V$ is proper if $\norm{b_g} \to \infty$ as $g\to \infty$. If $\pi$ is isometric (or uniformly bounded) and if $G$ is not compact, any proper $1$-cocycle defines a non-zero class in $H^1(G, \pi)$ since all co-boundaries are bounded.

We shall use the following generalization of Shalom's result:
\begin{proposition}\label{prop-Shalom}(\cite[Theorem 3.4]{Shalom2000}) Let $G$ be a connected, simple real-rank-one Lie group with finite center and $\pi$ be an isometric representation of $G$ on a reflexible Banach space. Suppose that a group $1$-cocycle $b$ for $G$ in $\pi$ is not a co-boundary. Then the cocycle $b$ is proper.
\end{proposition}
\begin{proof} The proof of Theorem 3.4 in \cite{Shalom2000} is still valid in this setting. Lemma 3.3 in \cite{Shalom2000} holds for isometric group actions on any reflexible Banach space by the Ryll--Nardzewski fixed point theorem \cite{Ryll1967} (see \cite[Section 3.4]{Nowak2015}).
\end{proof}

Note that any uniformly bounded representation $\pi$ of $G$ on a reflexible Banach space $(V, \norm{\,}_0)$ can be viewed as an isometric representation of $G$ on a reflexible Banach space $(V,  \norm{\,})$ where
\[
\norm{v}= \sup_{g\in G}\norm{\pi(g)v}_0
\]
for $v$ in $V$ (the two norms are equivalent). Therefore, thanks to Proposition \ref{prop-Shalom}, we have the following simple criteria to find a proper $1$-cocycle for a simple real-rank-one Lie group $G$:

\begin{lemma}\label{lem-criteria} Let $G$ be a connected, simple real-rank-one Lie group with finite center. Let $\pi$ be a uniformly bounded representation of $G$ on a normed vector space $V$ whose completion $\overline{V}$ is a reflexible Banach space. Suppose that a group $1$-cocycle $b$ for $G$ in $\pi$ is not a co-boundary in the completion $\overline{V}$. Then, the cocycle $b$ in $\pi$ is proper. \qed
\end{lemma}

Let $G$ be a real reductive Lie group as in \cite[Chapter 2]{WallachI}. We note that any connected, semi-simple Lie group with finite center is real reductive. Let $K$ be a maximal compact subgroup of $G$. Let $P$ be a minimal parabolic subgroup of $G$ and $W=\Omega^{\mathrm{top}}(G/P)$ be the vector space of top-degree forms on the compact manifold  $G/P$. We equip it with the natural representation $\pi$ of $G$, which is continuous with respect to the natural Fr\'echet topology. Let $W_0=\Omega^{\mathrm{top}}_{\int=0}(G/P)$ be a   $G$-invariant subspace of $W$ consisting of top-degree forms with zero integral.

We have a short exact sequence of representations of $G$:

\begin{equation}\label{eq-ext}
0 \to \Omega^{\mathrm{top}}_{\int=0}(G/P) \to \Omega^{\mathrm{top}}(G/P) \to \mathbb{C} \to 0,
\end{equation}
where $\bC$ is the trivial representation of $G$. Now take any norm $\norm{\,}_{W_0}$ on $W_0$ for which $\pi$ is continuous. Then, there is a norm $\norm{\,}_{W}$ on $W$, which is unique up to equivalences, so that we have the following short exact sequence of continuous representations of $G$ on Banach spaces:
\begin{equation}\label{eq-ext2}
0 \to \overline{(W_0, \norm{\,}_{W_0})} \to  \overline{(W, \norm{\,}_W)} \to \mathbb{C} \to 0,
\end{equation}
where $\overline{(\, , \,)}$ denotes the completion with respect to the given norm.  

\begin{lemma}\label{lem-noGsec} Suppose $G$ is not compact. For any norm $\norm{\,}_{W_0}$ on $W_0$ for which $\pi$ is continuous, the extension \eqref{eq-ext2} is non-trivial, i.e. there is no $G$-equivariant section to the quotient map $\overline{(W, \norm{\,}_W)} \to \mathbb{C}$.
\end{lemma} 
\begin{proof} The multiplicity of the trivial representation of $K$ in $W=\Omega^{\mathrm{top}}(G/P)$ is one. Here, the transitivity of the $K$-action on $G/P$ is used. The extension \eqref{eq-ext} splits $K$-equivariantly and $W_0=\Omega^{\mathrm{top}}_{\int=0}(G/P)$ does not contain any $K$-invariant vectors. We see that the projection $p_K\colon w\mapsto \int_K\pi(k)wd\mu(k)$ on $\overline{(W, \norm{\,}_W)}$ preserves $W_0$ and its zero on $W_0$, hence on its completion $\overline{(W_0, \norm{\,}_{W_0})}$. That is, the multiplicity of the trivial representation of $K$ in $ \overline{(W, \norm{\,}_W)}$ is still one so the $K$-invariant vectors in $ \overline{(W, \norm{\,}_W)}$ are multiples of the $K$-invariant volume form on $G/P$. On the other hand, there is no $G$-invariant volume form on $G/P$ unless $G$ is compact. Therefore, there is no nonzero $G$-invariant vector in $ \overline{(W, \norm{\,}_W)}$.
\end{proof} 

The corresponding to the extension \eqref{eq-ext} is a $1$-cocycle $b$ of $G$ in $W_0=\Omega^{\mathrm{top}}_{\int=0}(G/P)$. For any $v \in  W=\Omega^{\mathrm{top}}(G/P)$ which is a lift of $1\in \bC$, $b_g=v-\pi(g)v$ is a cocycle in $W_0$. This defines a unique class $[b]$ in $H^1(G, W_0)$ independently of the choice of a lift $v$. For example, we can take $v=\mathrm{vol}_{G/P}$ the $K$-invariant volume form on $G$ with respect to the canonical  $K$-invariant metric on $G/P$.

\begin{lemma}\label{lem-nonzero}  Suppose $G$ is not compact.  The $1$-cocycle $[b]\in H^1(G, W_0)$ is non-zero. For any norm $\norm{\,}_{W_0}$ on $W_0$ for which $\pi$ is continuous, the $1$-cocycle $[b]$ is nonzero in $H^1(G, \overline{(W_0, \norm{\,}_{W_0})})$.
\end{lemma}
\begin{proof} This follows because the extensions \eqref{eq-ext} and \eqref{eq-ext2} are non-trivial. 
\end{proof}

Combining this with Lemma \ref{lem-criteria}, we get the following:

\begin{proposition}\label{prop-criteria} Let $G$ be a connected, simple real-rank-one Lie group with finite center. Let $\pi$ be the natural representation of $G$ on $W_0=\Omega^{\mathrm{top}}_{\int=0}(G/P)$. Suppose that there is a norm $\norm{\,}_{W_0}$ on $W_0$ for which $\pi$ is continuous and uniformly bounded and for which the completion $\overline{(W_0, \norm{\,}_{W_0})}$ is a reflexible Banach space. Then, the $1$-cocycle $b$ in $\pi$ associated to the extension 
\[
0 \to \Omega^{\mathrm{top}}_{\int=0}(G/P) \to \Omega^{\mathrm{top}}(G/P) \to \mathbb{C} \to 0
\]
is proper with respect to the norm $\norm{\,}_{W_0}$.
\end{proposition}
\begin{proof}  By Lemma \ref{lem-nonzero}, $b$ is not a co-boundary in the reflexible Banach space $\overline{(W_0, \norm{\,}_{W_0})}$. By Lemma \ref{lem-criteria}, $b$ is proper.
\end{proof}

What we have shown is that if we find an appropriate norm on $W_0=\Omega^{\mathrm{top}}_{\int=0}(G/P)$ so that $\pi$ is continuous and uniformly bounded and so that the completion is a reflexible Banach space, then, we automatically obtain a proper $1$-cocycle for $G$ in $\overline{(W_0, \norm{\,}_{W_0})}$. In particular, if $W_0$ admits a Euclidean norm for which $\pi$ is continuous and uniformly bounded, then Shalom's conjecture holds for $G$ (if $G$ is a connected, simple real-rank-one Lie group with finite center). For connected, simple real-rank-one Lie groups $G=\SO_0(n, 1)$ ($n\geq2$), $\SU(n,1)$ ($n\geq2$), $\Sp(n,1)$ ($n\geq2$), this is indeed the case:

\begin{theorem}\label{thm-W0-ub} Let $G$ be any one of simple real-rank-one Lie groups $\SO_0(n, 1)$ ($n\geq2$), $\SU(n,1)$ ($n\geq2$) and $\Sp(n,1)$ ($n\geq2$). Let $\pi$ be the natural representation of $G$ on $W_0=\Omega^{\mathrm{top}}_{\int=0}(G/P)$. Then, there is a Euclidean norm on $W_0$ for which $\pi$ is continuous and uniformly bounded.
\end{theorem} 

\begin{theorem}\label{thm-Shalom-conj} Let $G$ be any one of simple real-rank-one Lie groups $\SO_0(n, 1)$ ($n\geq2$), $\SU(n,1)$ ($n\geq2$) and $\Sp(n,1)$ ($n\geq2$). Let $\pi$ be the natural representation of $G$ on $W_0=\Omega^{\mathrm{top}}_{\int=0}(G/P)$. Then, there is a Euclidean norm $\norm{\,}_{W_0}$ on $W_0$ for which $\pi$ is continuous and uniformly bounded. The $1$-cocycle $b$ in $\pi$ associated to the extension 
\[
0 \to \Omega^{\mathrm{top}}_{\int=0}(G/P) \to \Omega^{\mathrm{top}}(G/P) \to \mathbb{C} \to 0
\]
is proper with respect to the norm $\norm{\,}_{W_0}$. In particular, if $\Gamma$ is any non-compact closed subgroup inside the Lie groups $\SO_0(n, 1)$ ($n\geq2$), $\SU(n,1)$ ($n\geq2$), $\Sp(n,1)$ ($n\geq2$) and their products, $\Gamma$ admits a proper $1$-cocycle in a uniformly bounded representation on a Hilbert space.
\end{theorem}
\begin{proof} Combine Theorem \ref{thm-W0-ub} with Proposition \ref{prop-criteria}. 
\end{proof}

The rest of this section is devoted to the proof of Theorem \ref{thm-W0-ub}. The proof is a simple application of a theorem of Michael Cowling on uniformly bounded representations of simple rank-one Lie groups. We start by recalling standard terminologies. Let $G$ be any one of $\SO_0(n, 1)$ ($n\geq2$), $\SU(n,1)$ ($n\geq2$) and $\Sp(n,1)$ ($n\geq2$), $K$ be a maximal compact subgroup of $G$ and $P$ be a minimal parabolic subgroups with the Langlands decomposition $P=MAN$. The corresponding Lie algebras are denoted by $\mathfrak{g}$, $\mathfrak{a}$,  $\mathfrak{n}$, $\ldots$ as usual. 

Let $\rho$ be the half-sum of the roots of $\mathfrak{a}$ on $\mathfrak{n}$ (note they are the positive roots). For any unitary irreducible (finite-dimensional) representation $\mu$ of $M$ on $\hill_\mu$ and for any complex number $\lambda\in \bC$, we consider the vector space $\mathcal{I}_{\mu, \lambda}$ of $\hill_\mu$-valued measurable functions $f$ on $G$ satisfying
\[
f(gman)=\mu^{-1}(m)\mathrm{exp}( -(1 + \lambda)\rho)\log(a)) f(g)
\]
for any $man$ in $MAN=P$ where $\log$ is the inverse of the exponential map $\mathrm{exp}\colon \mathfrak{a} \to A$. The group $G$ acts on $\mathcal{I}_{\mu, \lambda}$ and on its subspace $\mathcal{I}^\infty_{\mu, \lambda}$ consisting of smooth functions by the left-translation. We denote this representation on $\mathcal{I}^\infty_{\mu, \lambda}$ by $\mathrm{ind}_{\mu, \lambda}$. This is the principal series representation associated to $\mu$ and $\lambda$. The subspace $\mathcal{I}^\infty_{\mu, \lambda}$ is naturally identified as the space of the smooth sections of the vector bundle $G\times_P\hill_\mu$ over $G/P$ where $\hill_\mu$ is regarded as the representation of $P$ by letting $a\in A$ act by the scalar $\mathrm{exp}((1+\lambda)\rho(\log a))$ and by letting $N$ act trivially. Note when $\mu$ is the trivial representation $1_M$ and when $\lambda=\pm1$, $\mathcal{I}^\infty_{\mu, \lambda}=\mathcal{I}^\infty_{1_M, \pm1}$ are naturally identified as $C^\infty(G/P)$ for $\lambda=-1$, the smooth functions on $G/P$ with the left-translation $G$-action and $\Omega^\mathrm{top}(G/P)$ for $\lambda=1$, the top-degree forms on $G/P$ with the natural $G$-action.   We recall that we have $G=KP$ and $G/P=K/M$ canonically.

For any $1\leq p \leq +\infty$, there is a canonical $p$-norm on $\mathcal{I}_{\mu, \lambda}$: 
\[
 \lVert f\rVert_p^{(K)} = \bigg{(}\int_K \lVert f(k)\rVert^pd\mu_K(k)\bigg{)}^{1/p} = \bigg{(}\int_{K/M} \lVert f(kM)\rVert^pd\mu_{K/M}(kM)\bigg{)}^{1/p}.
\]
If $p=\infty$, the norm is defined as the essential supremum $\mathrm{ess}\sup_{k\in K} ||f(k)||$. Here, $\mu_K$ is the normalized Haar measure on $K$.

\begin{proposition}\label{prop_lp}(\cite[Lemma 5.2]{Cowling2010}) Suppose $\lambda$ is in the tube $T$ given by
\[
T = \{ \lambda \in \bC \mid \mathrm{Re}(\lambda) \in [-1, 1] \}
\]
and let $p\in [1, +\infty]$ be given by the formula
\[
1/p= \mathrm{Re}(\lambda)/2 + 1/2.
\]
Then, $G$ acts isometrically on $\mathcal{I}_{\mu, \lambda}^{L^p}$, the space of functions $f$ in $\mathcal{I}_{\mu, \lambda}$ for which $\lVert f\rVert_p^{(K)}$ is finite, equipped with the same norm.  \end{proposition}
The following defines a non-degenerate, $G$-equivariant paring between $\mathcal{I}^\infty_{\mu^\ast, \lambda}$ and $\mathcal{I}^\infty_{\mu, -\lambda}$:
\[
\mathcal{I}^\infty_{\mu^\ast, -\lambda} \times \mathcal{I}^\infty_{\mu, \lambda} \ni (f_1, f_2) \mapsto \int_K f_1(k)\cdot f_2(k) \mu_K(k).
\] 
Thus, the representations $\mathrm{ind}_{\mu, \lambda}$ and $\mathrm{ind}_{\mu^\ast, -\lambda}$ are naturally dual to each other. 
 
    \begin{theorem}\label{thm-Cowling-ub}(\cite[Theorem 7.1]{Cowling2010}, c.f. \cite{ACD2004}, see also \cite[Theorem 26, Corollary 27]{Julg2019}) Suppose $\lambda$ is inside the tube $T$, namely suppose $\mathrm{Re}(\lambda) \in (-1, 1)$. Then, there is a Euclidean norm on $\mathcal{I}^\infty_{\mu, \lambda}$ for which $\mathrm{ind}_{\mu, \lambda}$ is continuous and uniformly bounded.
   \end{theorem}
   
  We discuss some corollary to this theorem. This is of independent interest and there would be no problem for the reader to skip this part.  Recall that an admissible $(\mathfrak{g}, K)$-module (see \cite[3.3]{WallachI}) is a representation of the universal enveloping algebra of $\mathfrak{g}$ on a complex vector space such that
  \begin{enumerate}
  \item the representation of $\mathfrak{k}$ exponentiates to $K$;
  \item every vector lies in a finite-dimensional $K$-stable subspace (such a vector is called $K$-finite);
  \item every irreducible representation of $K$ occurs with finite multiplicity.
  \end{enumerate}
  
  A continuous representation of $G$ on a Hilbert space is called admissible if every irreducible representation of $K$ occurs with finite multiplicity. If $\pi$ is a continuous admissible representation on a Hilbert space $\hill$, the subspace $\hill_K$ of K-finite vectors is an admissible $(\mathfrak{g}, K)$-module. Every admissible, irreducible (more generally, finitely generated)  $(\mathfrak{g}, K)$-module $V$ is equivalent to $\hill_K$ for some continuous representation  $\pi$ on a Hilbert space $\hill$ \cite[3.8.3]{WallachI}. Two continuous Hilbert space representations are (infinitesimally) equivalent if the underlying $(\mathfrak{g}, K)$-modules are equivalent.

  \begin{corollary}\label{cor-TFAE} Let $G$ be any one of $\SO_0(n, 1)$ ($n\geq2$), $\SU(n,1)$ ($n\geq2$) and $\Sp(n,1)$ ($n\geq2$). Let $\pi$ be a non-trivial irreducible admissible representation of $G$ on a Hilbert space $\hill$. Then, the following are equivalent:
  \begin{enumerate}
  \item $\pi$ is infinitesimally equivalent to a uniformly bounded representation of $G$ on a Hilbert space;
  \item $\pi$ is infinitesimally equivalent either to an irreducible representation which is weakly contained in the left-regular representation or to the unique irreducible quotient $J_{\sigma, \lambda}$ of $\mathrm{ind}_{\sigma, \lambda}$ for some irreducible representation $\sigma$ of $M$ and for $\lambda$ with $0<\mathrm{Re}(\lambda)<1$;
  \item For any $u, v$ in $\hill_K$, the matrix coefficient $c_{u, v}\colon g\mapsto \s{gu, v}$ is $C_0$-function on $G$;
  \item For any $u, v$ in $\hill_K$, the matrix coefficient  $c_{u, v}\colon g\mapsto \s{gu, v}$ is $L^p$-function on $G$ for some $0<p<\infty$.
    \end{enumerate}
   \end{corollary}
\begin{proof}  The implication (1) $\implies$ (2) is  \cite[IV. Theorem 5.2]{BW} specialized to the rank-one case. The implication (2) $\implies$ (3) and (4) is proven in \cite[IV. Theorem 5.4]{BW} for any connected, simple Lie group with finite center. (2) $\implies$ (1) by Theorem \ref{thm-Cowling-ub}. 
 On the other hand, any irreducible admissible irreducible $(\mathfrak{g}, K)$-module is equivalent to the Langlands quotient $J_{P, \sigma, \nu}$ for a uniquely determined Langlands data $(P, \sigma, \nu)$ as in \cite[IV. Theorem 4.11]{BW}. If the condition (2) is not satisfied, $\pi$ is infinitesimally equivalent to the unique irreducible quotient $J_{\sigma, \lambda}$ of $\mathrm{ind}_{\sigma, \lambda}$ for $\mathrm{Re}(\lambda)\geq1$. As in the proof of \cite[IV. Theorem 5.2]{BW}, this implies that the matrix coefficients $c_{v_1, v_2}$ for $\pi$ satisfy
 \[
 \lim_{t\to \infty}e^{-t\alpha}c_{v_1, v_2}(a_t) = L(v_1, v_2)
 \]
 for any $v_1, v_2$ in $\hill_K$ for some nonzero sesquilinear form $L$ on $\hill_K$ and for some $\alpha\geq 0$ where $a_t\in A^+=\exp(\mathfrak{a}^+)$ for $t\geq0$ (the parametrization in $t$ is with respect to a fixed nonzero positive functional on $\mathfrak{a})$. It is not hard to see that such $c_{v_1, v_2}$ can neither be a $C_0$-function nor a $L^p$-function on $G$. This shows  (3) or (4) $\implies$ (2).
 \end{proof}
   
 We now give a proof of Theorem \ref{thm-W0-ub}. Let us first consider the principal series representation $\mathrm{ind}_{1_M, -1}$ on $\mathcal{I}_{1_M, -1}=C^\infty(G/P)$. The tangent bundle $T(G/P)$ of $G/P$ contains a $G$-equivariant subbundle $E$ of codimension $1$ for $G=\SU(n, 1)$ and of codimension $3$ for $G=\Sp(n, 1)$. The fiber of the cotangent bundle $T^\ast (G/P)$ at $P$ is naturally identified as the Lie algebra $\mathfrak{n}\cong (\mathfrak{g}/\mathfrak{p})^\ast$ where the isomorphism is via the Killing form. The nilpotent Lie algebra $\mathfrak{n}$ contains the center $\mathfrak{z}$ of dimension $1$ for $G=\SU(n, 1)$ and of dimension $3$ for $G=\Sp(n, 1)$. This defines a $G$-equivariant subbundle $F$ of $T^\ast (G/P)$ and the subbundle $E$ of $T(G/P)$ is defined as the annihilator $F^\perp$ of $F$  (see \cite{Julg2019} for some detail). When $G=\SO_0(n, 1)$, we set $E=T(G/P)$.

We let $\Gamma(E^\ast)$ be the complexified vector space of smooth sections of the bundle $E^\ast$ and 
\[
d_E=\,\,\, \mid_E \circ d \colon C^\infty(G/P)\to \Omega^1(G/P) \to \Gamma(E^\ast)
\]
be the composition of the de-Rham differential $d$ and the restriction of one-forms defined on $T(G/P)$ to the subbundle $E$. With respect to the natural $G$-action on $C^\infty(G/P)$ and $\Gamma(E^\ast)$, $d_E$ is $G$-equivariant. The kernel of $d_E$ is spanned by the constant function $1_{G/P}$ since the vector fields along $E$ generate all the vector fields on $G/P$.

\begin{lemma}\label{lem-principal}  Let $G$ be one of $\SO_0(n ,1)$ for $n\geq3$, $\SU(n ,1)$ for $n\geq2$ and $\Sp(n, 1)$ for $n\geq2$. Then, $\Gamma(E^\ast)$ equipped with the natural representation of $G$ is naturally identified as the principal series representation $\mathrm{ind}_{\mu, \lambda}$ for some finite-dimensional unitary representation $\mu$ of $M$ and $\lambda=-1+2/r$ where $r=n-1$ for $\SO_0(n ,1)$, $r=2n$ for $\SU(n ,1)$ and $r=4n+2$ for $\Sp(n, 1)$.  
\end{lemma}
\begin{proof} For any $G$, $\Gamma(E^\ast)$ is canonically identified as a representation induced from the representation $\mu$ of $P$ on the fiber of $E^*$ at $P$, which is the adjoint representation of $P=MAN$ on the complexification of $(\mathfrak{n}/\mathfrak{z})$ where $\mathfrak{z}=0$ for $\SO_0(n, 1)$. In each case, $A$ acts by the character $a\mapsto \mathrm{exp}(\frac2r\rho\log a)$ and $N$ acts by the identity. For $\SO_0(n, 1)$, the representation $\mu$ of $M=\SO(n-1)$ is the standard representation on $\bC^{n-1}$. For $G=\SU(n, 1)$, the complexification of $(\mathfrak{n}/\mathfrak{z}) \cong \bC^{n-1}$ decomposes into two irreducible unitary representations of $M$. For $G=\Sp(n , 1)$, the representation $\mu$ of $M$ is an irreducible unitary representation on $\Ha^{n-1}\otimes_\R\bC$.
\end{proof}

\begin{corollary}\label{cor-East-ub} Let $G$ be one of $\SO_0(n ,1)$ for $n\geq3$, $\SU(n ,1)$ for $n\geq2$ and $\Sp(n, 1)$ for $n\geq2$. Then, there is a Euclidean norm on $\Gamma(E^\ast)$ for which  the natural representation of $G$ is continuous and uniformly bounded.
\end{corollary}
\begin{proof} Combine Lemma \ref{lem-principal} and Theorem \ref{thm-Cowling-ub}. 
\end{proof} 

\begin{proposition}\label{prop-pi0}  Let $G$ be one of $\SO_0(n ,1)$ for $n\geq2$, $\SU(n ,1)$ for $n\geq2$ and $\Sp(n, 1)$ for $n\geq2$. Let $\pi$ be the principal series representation $\mathrm{ind}_{1_M, -1}$ on $\mathcal{I}^\infty_{1_M, -1}$: the natural representation of $G$ on $C^\infty(G/P)$. Consider the trivial sub-representation $\bC1_{G/P}$ of $\pi$, consisting of constant functions. Let $\pi_0$ be the quotient representation of $\pi$ on $C^\infty(G/P)/\bC1_{G/P}$. Then, there is a Euclidean norm on $C^\infty(G/P)/\bC1_{G/P}$ for which $\pi_0$ is continuous and uniformly bounded.
\end{proposition}
\begin{proof} The de-Rham differential $d_E$ induces a $G$-equivariant embedding of the representation $\pi_0$ on $C^\infty(G/P)/\bC1_{G/P}$ to the natural representation on  $\Gamma(E^\ast)$. By Corollary \ref{cor-East-ub}, the latter has a desired Euclidean norm except when $G=\SO_0(2 ,1)$. In the exceptional case $G=\SO_0(2 ,1)$, the representation $\pi_0$ on $C^\infty(G/P)/\bC1_{G/Px}$ is canonically equivalent, through the Poisson transform, to the square-integrable representation on $L^2$-harmonic $1$-forms on $G/K$ which is a unitary representation.
\end{proof}

\begin{proof}[~Proof of Theorem \ref{thm-W0-ub}] By Proposition \ref{prop-pi0}, the natural representation $\pi_0$ on $C^\infty(G/P)/\bC1_{G/P}$ admits a Euclidean norm for which $\pi_0$ is continuous and uniformly bounded. On the other hand, the representation $\pi$ on $W_0=\Omega^{\mathrm{top}}_{\int=0}(G/P)$ is a dual representation of $\pi_0$ through the non-degenerate pairing induced from the paring: 
\[
\Omega^{\mathrm{top}}(G/P) \times C^\infty(G/P) \ni (\omega, f) \mapsto \int_{G/P} f\omega.
\] 
The assertion is now immediate.
\end{proof}

 \section{Preliminaries for an alternative proof}\label{sec-prelim}
We will provide an alternative proof of Shalom's conjecture for $\Sp(n, 1)$. In this proof, we will prove explicitly the properness of the cocycle $b$ as in Theorem \ref{thm-Shalom-conj} meaning that we will not use the automatic properness result of Proposition \ref{prop-criteria}. This requires us to understand the content of Cowling's theorem (Theorem \ref{thm-Cowling-ub}) in a much deeper level. Because of this, we refresh our argument from scratch, starting with some preliminaries.

References for this preliminary section are \cite{Folland1975}, \cite{Cowling2010}, \cite{ACD2004} and \cite{Julg2019}.
   \subsection*{Lie groups $O(q)$}
   We shall follow the notations used in \cite{Cowling2010} mostly but not entirely: for example, we define the Lie group $O(q)$ as matrices of right-linear transformations on the right-vector space $\F^{n+1}$ although the left-linear convention was used in \cite{Cowling2010}. 
   
   Let $\F=\R, \bC, \Ha$ be the field of real numbers, complex numbers or quaternions with the natural inclusions between them. We write an element $z$ in $\F$ as
   \[
   z=s+t{\bold i} + u{\bold j} + v{\bold k}
   \]
   and write
   \[
   \bar z= s-t{\bold i} - u{\bold j} - v{\bold k},\,\,\,  |z|=(\bar z z)^{1/2},\,\,\, \mathrm{Re}(z)=\frac{z+\bar z}{2}, \,\,\, \mathrm{Im}(z)=\frac{z-\bar z}{2}.
   \]
   The imaginary part $\mathrm{Im}(\F)$ of $\F$ consists of the range of $\mathrm{Im}$ which is a vector subspace of $\F$ over $\R$.  We consider the right vector space $\F^{n+1}$ over $\F$ with the standard basis $e_0, e_1, \cdots, e_n$. The coordinates of an element $z$ in $\F^{n+1}$ with respect to the basis $(e_j)_{j=0}^{j=n}$ is written as $z=(z_j)_{j=0}^{j=n}$ for $z_j$ in $\F$. A sesquilinear form $q$ on $\F^{n+1}$ is given by
   \[
   q(z, w)=-\bar z_0 w_0 + \sum_{j=1}^{j=n}\bar z_j w_j
   \]
   for $z, w$ in $\F^{n+1}$. We consider the group $O(q)$ of $(n+1)\times (n+1)$ matrices over $\F$ which acts on $\F^{n+1}$ from left and preserves the quadratic form $q$, i.e. $A$ in $O(q)$ satisfies 
   \[
   q(Az, Aw)=q(z, w) \,\,\, \text{for any $z=\s{z_0, \cdots, z_n}^T$, $w=\s{w_0, \cdots, w_n}^T$ in $\F^{n+1}$}.
   \] 
   The matrix group $O(q)$ is a matrix Lie group and is connected unless $\F=\R$. The Lie group $\SO_0(n, 1)$ is the connected component of the identity of $O(q)$ for $\F=\R$, $\SU(n, 1)$ is $O(q)\cap SL(n+1, \bC)$ for $\F=\bC$ and $\Sp(n, 1)$ is $O(q)$ for $\F=\Ha$. From now on, a group $G$ is one of $\SO_0(n, 1)$ ($n\geq2$), $\SU(n, 1)$ ($n\geq2$) and $\Sp(n, 1)$ ($n\geq2$) \footnote{We used the definition for $\SO_0(1, n)$, $\SU(1, n)$, $\Sp(1, n)$ but the difference is merely in whether we take $e_0$ as the first coordinate or as the last coordinate}. The Lie algebra $\mathfrak{g}$ of $G$ consists of matrices of the form
   \[
   \begin{bmatrix} X & x^\ast \\ x & Y \end{bmatrix}
   \]
   where $X$ is in $\mathrm{Im}(\F)$, $x$ is in $\F^{n}$, $Y$ in $M_n(\F)$ satisfies $Y+Y^\ast=0$, and the trace of $Y$ must be $-X$ when $\F=\bC$. Here, the star $\ast$ for a matrix is the conjugate transpose. 
   We let
   \[
   K=G\cap O(|\,\,\,|)
   \]
   which is a closed subgroup of $G$ that preserves the canonical Euclidean metric on $\F^{n+1}$. The group $K$ is a maximal compact subgroup of $G$ and it is connected: in fact all elements in $K$ can be written as
   \[
\mathrm{exp}\begin{bmatrix} X & 0 \\ 0 & Y \end{bmatrix}
   \]
where $X$ in $\mathrm{Im}(\F)$ and $Y$ in $M_n(\F)$ satisfies $Y+Y^\ast=0$ (and the trace of $Y$ must be $-X$ when $\F=\bC$). We let
   \[
   A= \{\, a(t)\in G \mid t\in \R \, \}
   \]
which is a closed subgroup of $G$ consisting of elements $a(t)$ of the form
   \[
  a(t) = \begin{bmatrix} c_t & 0 & s_t \\ 0 & 1 & 0 \\ s_t & 0 & c_t \end{bmatrix}
   \] 
   for $t$ in $\R$ where the expression has the $(n-1)\times (n-1)$ identity matrix in the middle entry, and $1$ means the identity matrix, and where $c_t=\cosh t$ and $s_t=\sinh t$ are the hyperbolic cosine and sine respectively. With this coordinate, we shall naturally identify $A$ as the Lie group $\R$. The element $a(t)$ can be written as 
   \[
   U \begin{bmatrix} e^{-t} & 0 & 0 \\ 0 & 1 & 0 \\ 0 & 0 & e^{t} \end{bmatrix} U^{-1}
   \] 
   where 
   \[
   U=U^*=U^{-1} = \begin{bmatrix} -1/\sqrt2 & 0 &  1/\sqrt2 \\ 0 & 1 & 0 \\ 1/\sqrt2 & 0 &  1/\sqrt2 \end{bmatrix}.
   \]
 We let
   \[
   V= \{\, v(x, y)\in G \mid x \in \F^{n-1}, y\in \mathrm{Im}(\F) \,\}, 
   \]
   \[
   N=\{\, n(x, y)\in G \mid x \in \F^{n-1}, y\in \mathrm{Im}(\F) \ \}
   \]
   be closed subgroups of $G$ which consist of elements $v(x, y)$, $n(x, y)$ respectively of the form
   \[
  v(x, y) = U\begin{bmatrix} 1 & -x^\ast & (y-x^*x)/2 \\ 0 & 1 & x  \\ 0 & 0 & 1 \end{bmatrix}U^{-1} = \mathrm{exp} \bigg{(}U \begin{bmatrix} 0 & -x^\ast & y/2 \\ 0 & 0 & x  \\ 0 & 0 & 0 \end{bmatrix}  U^{-1} \bigg{)}, 
  \]
  \[
  n(x, y) = U\begin{bmatrix} 1 & 0 & 0 \\ x & 1 & 0  \\ (y-x^*x)/2  & -x^\ast & 1 \end{bmatrix}U^{-1} = \mathrm{exp} \bigg{(}U \begin{bmatrix} 0 & 0 & 0 \\ x & 0 &  0  \\ y/2 & -x^\ast & 0 \end{bmatrix}  U^{-1} \bigg{)}, 
   \] 
   for $x$ in $\F^{n-1}$ and $y$ in $\mathrm{Im}(\F)$ where each of the expressions has an $(n-1)\times (n-1)$ matrix in the middle entry. 
   
   \begin{remark} We have 
     \[
 \underline{v}(x, y) = U \begin{bmatrix} 0 & -x^\ast & y/2 \\ 0 & 0 & x  \\ 0 & 0 & 0 \end{bmatrix}  U^{-1}  =  \begin{bmatrix} -y/4 & x^\ast/\sqrt2 & -y/4 \\ x/\sqrt2 & 0 & x/\sqrt2  \\ y/4 & -x^*/\sqrt2 & y/4 \end{bmatrix}, 
   \]
   \[
\underline{n}(x, y) = U \begin{bmatrix} 0 & 0 & 0 \\ x & 0 &  0  \\ y/2 & -x^\ast & 0 \end{bmatrix}  U^{-1} =  \underline{v}(-x, -y)^\ast = \begin{bmatrix} -y/4 & -x^\ast/\sqrt2 & y/4 \\ -x/\sqrt2 & 0 & x/\sqrt2  \\ -y/4 & -x^*/\sqrt2 & y/4 \end{bmatrix}.
   \]
We have
 \[
   v(x, y) =  \begin{bmatrix} 1+x^\ast x/4 -y/4 & x^\ast/\sqrt2 & x^\ast x/4 -y/4 \\ x/\sqrt2 & 1 & x/\sqrt2  \\ -x^\ast x/4 + y/4 & -x^*/\sqrt2 & 1- x^\ast x/4 + y/4 \end{bmatrix}, 
 \]
 \[
 n(x, y)= v(-x, -y)^\ast =  \begin{bmatrix} 1+x^\ast x/4 -y/4 & -x^\ast/\sqrt2 & -x^\ast x/4+y/4 \\ -x/\sqrt2 & 1 & x/\sqrt2  \\ x^\ast x/4 - y/4 & -x^*/\sqrt2 & 1- x^\ast x/4 + y/4 \end{bmatrix}.
 \]
Let 
 \[
 w_0=  \begin{bmatrix} -1 & 0 \\ 0  & 1 \end{bmatrix}
 \]
 where $-1$ is a $1\times 1$-matrix and $1$ is an $n\times n$-matrix. We have
 \[
 w_0^2=1, \,\,\, w_0v(x,y)w_0=n(x,y), \,\,\, w_0\underline{v}(x, y)w_0 = \underline{n}(x, y).
 \]
  \end{remark}
   
   The closed subgroup $M$ of $G$ is defined as the centralizer of $A$ in $K$. We have
   \[
   M= \{ \, \begin{bmatrix} q & 0  & 0 \\ 0 & m & 0 \\ 0 & 0 & q \end{bmatrix} \in K \,\}.
   \]
The closed subgroup $MA$ normalizes $N$ and $V$ so $MAN$, $MAV$ are closed subgroups of $G$. We set $P=MAN$. 

We give a standard geometric description of the symmetric space $G/K$ and the homogeneous space $G/P$. The Lie group $G$ naturally acts on the projective space $P(\F^{n+1})$ over $\F$ and an orbit 
\[
G\cdot [1, 0, \cdots, 0]^T
\]
consists of points of the form $[z_0, z_1, \cdots, z_n]^T$ where $\sum_{j=1}^{j=n}|z_j|^2<|z_0|^2$. The isotropy subgroup of $G$ at the point $[1, 0, \cdots, 0]^T$ is the maximal compact subgroup $K$ so this orbit is canonically identified as $G/K$. Let 
\[
d=d_G=\mathrm{dim}_{\R}\F.
\]
Later, we shall use the same notation $d$ for the de-Rham differential operator but it should not cause any confusion. We have the following standard identification:
\[
Z=G/K = \{ (z_j)_{j=1}^{j=n} \in \F^n \mid \sum_{j=1}^{j=n}|z_j|^2<1\} = \mathbb{D}^{dn}
\]
of $G/K$ with the $dn$-dimensional disk $\mathbb{D}^{dn}$ in the real Euclidean space where we identify the point $(z_j)_{j=1}^{j=n}$ in the disk with the point $[1, z_1, \cdots, z_n]^T$ in the projective space. With this identification, the $G$-action on the disk $\mathbb{D}^{dn}=G/K$ can be written as
\[
\begin{bmatrix} a & b \\ c & d \end{bmatrix} \begin{bmatrix} z_1 \\ \vdots \\ z_n\end{bmatrix} =\bigg{(}  c+ d \begin{bmatrix} z_1 \\ \vdots \\ z_n\end{bmatrix} \bigg{)}  \bigg{(}a+ b\begin{bmatrix} z_1 \\ \vdots \\ z_n\end{bmatrix}  \bigg{)}^{-1}\,\,\, \text{for} \,\,\,g=\begin{bmatrix} a & b \\ c & d \end{bmatrix} \in G,
\]
where $a$ is in $\F$, $b$ is a $1\times n$-matrix, $c$ is an $n\times 1$-matrix and $d$ in an $n\times n$ matrix over $\F$. This formula for the $G$-action still makes sense on the boundary sphere $S^{dn-1}$ of $\mathbb{D}^{dn}$. The maximal compact subgroup $K$ acts on the disk and on the sphere as rotations and the isotropy subgroup of $G$ at the point $o=(0, \cdots, 0, 1)$ in the sphere is the closed subgroup $P=MAN$. We note that $K\cap P=M$. We obtain the standard identification
\[
G/P = G\cdot o  = S^{dn-1} = K/M.
\]
Basic computations show 
\[
v(x, y)\cdot o = \begin{bmatrix} \sqrt2x \\ 1-x^*x/2+y/2 \end{bmatrix}  \bigg{(} 1+x^*x/2 - y/2 \bigg{)}^{-1},
\]
\[
n(x, y)\cdot  (-o) =  w_0\cdot (v(x, y)\cdot o) =\begin{bmatrix} -\sqrt2x \\ -1+x^*x/2-y/2 \end{bmatrix}  \bigg{(} 1+x^*x/2 - y/2 \bigg{)}^{-1}
\]
in $S^{dn-1}\subset \F^{n}$ for $x$ in $\F^{n-1}$, $y$ in $\mathrm{Im}(\F)$ and $-o=(0, \cdots, 0, -1)$. Here, the first row has an entry in $\F^{n-1}$ and the second row has an entry in $\F$. From this, we see $V\cap P$ consists only of the identity and $V$ acts transitively on the orbit $V\cdot o$ which is $S^{dn-1}-\{-o\}$. We obtain a Bruhat decomposition 
\[
G/P= VP \sqcup wP \cong V \sqcup \{\infty\}
\]
where $w$ is some fixed element in $K$ of the form
\[
\begin{bmatrix} 1 & 0  & 0 \\ 0 & m & 0 \\ 0 & 0 & -1 \end{bmatrix}.
\]
The Cayley transform for $V$ (and similarly for $N$)
   \begin{equation}\label{eq-Cayley}
   \mathcal{C}\colon V \to G/P=K/M
  \end{equation}
 is defined by 
 \[
 \mathcal{C}(v)=v\cdot o \in G/P
 \]
  or equivalently
  \[
   \mathcal{C}(v)=\tilde K(v)M \in K/M
  \]
  where $g=\tilde K(g)\tilde N(g)\tilde A(g)$ for $g\in G$ with respect to the decomposition $G=KNA$. See \cite{ACD2004} for more on the Cayley transform.

 \subsection*{Analysis on Heisenberg groups}
 Let $\underline{V}$ be a Lie algebra 
 \[
 \underline{V} = \mathfrak{o} \oplus \mathfrak{z}, \,\,\,  \mathfrak{o}=\F^{n-1}, \,\,\, \mathfrak{z}=\mathrm{Im \F} 
 \]
 with Lie Bracket
 \[
 [(x', y'), (x, y)] =\bigg{(} 0,  \,-2\mathrm{Im}(x'^\ast x)\bigg{)}
 \]
 for $x$ in $ \mathfrak{o}$ and $y$ in $\mathfrak{z}$. The Lie algebra $\underline{V}$ is naturally the Lie algebra of the closed subgroup $V$ of $G$ with exponential map
 \[
 \underline{V} \ni (x, y) \mapsto v(x, y) = U\begin{bmatrix} 1 & -x^\ast & (y-x^*x)/2 \\ 0 & 1 & x  \\ 0 & 0 & 1 \end{bmatrix}U^{-1}  \in V.
 \]
 We have $v(x', y')v(x, y)=v(x'+x, y'+y-2\mathrm{Im}(x'^*x))$. The exponential map is a homeomorphism and the Lie group $V$ is a simply connected, two-step nilpotent Lie group (when $\F=\R$, it is abelian). With this coordinate
 \[
 (x, y) \in   \underline{V} = \mathfrak{o} \oplus \mathfrak{z} = \F^{n-1} \oplus \mathrm{Im \F} \cong \R^{d(n-1)}\oplus \R^{d-1}
 \]
 for elements $v(x, y)$ in $V$, we use a Haar measure $d\mu_V=dxdy$ for $V$ where $dx$ and $dy$ are the Lebesgue measures on $\mathfrak{o} =\F^{n-1}$ and on $\mathfrak{z} =\mathrm{Im}(\F)$ which are naturally Euclidean spaces with norm $\lVert\,\,\rVert_{\mathfrak{o}}$  and $\lVert\,\,\rVert_{\mathfrak{z}}$ respectively.
 
 We set
 \[
 \mathcal{N}(x, y)=\bigg{(} |x^*x|^2 + |y|^2 \bigg{)}^{1/4} = \bigg{(} \lVert x\rVert_{\mathfrak{o}}^4 + \lVert y\rVert_{\mathfrak{z}}^2 \bigg{)}^{1/4}.
\]
This is a homogeneous function on the stratified Lie group $V$ of degree one in the sense of Folland \cite[Page 164]{Folland1975}. 

We define  \[
r=r_G=\mathrm{dim}_\R \F^{n-1}+2\mathrm{dim}_\R \mathrm{Im}(\F) = d(n-1)+2(d-1) =d(n+1)-2,
\]
namely
\[
r= \begin{cases} n-1 & \text{for $\SO_0(n,1)$ ($\F=\R$),}  \\  2n &  \text{for $\SU(n,1)$ ($\F=\bC$)}, \\ 4n+2 & \text{for $\Sp(n,1)$ ($\F=\Ha$)}. \end{cases}
\]

 \begin{lemma}(see \cite[Lemma 1.1]{Cowling2010}) \label{lem_locint} For any complex number $\xi$ with $\mathrm{Re}(\xi)>0$, $\mathcal{N}^{\xi-r}$ is locally integrable everywhere on $V$ and defines a distribution 
 \[
 f \mapsto \int_{V} f(x^{-1})\mathcal{N}^{\xi-r}(x)d\mu_V(x)
 \]
 on $C_c^\infty(V)$. If $\mathrm{Re}(\xi)\leq0$, $\mathcal{N}^{\xi-r}$ is  not locally integrable at the origin of $V$. \qed
 \end{lemma}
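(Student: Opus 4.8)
\medskip
\noindent\emph{Proof idea.} The plan is to reduce the question to the behaviour of $\mathcal{N}^{\xi-r}$ near the identity $e=v(0,0)$ of $V$ and to evaluate the resulting integral in polar coordinates adapted to the dilations of $V$. First note that $\mathcal{N}$ is continuous, smooth on $V\setminus\{e\}$, and vanishes only at $e$ (since $\mathcal{N}(x,y)=0$ forces $x=0$ and $y=0$); hence $\mathcal{N}^{\xi-r}$ is smooth, in particular locally bounded, on $V\setminus\{e\}$, so it is locally integrable at every point other than $e$ for every $\xi$. The whole statement therefore comes down to integrability on the neighbourhood $B=\{v\in V:\mathcal{N}(v)\le1\}$ of $e$, where one uses $|\mathcal{N}^{\xi-r}|=\mathcal{N}^{\mathrm{Re}(\xi)-r}$ because $\mathcal{N}$ is positive.

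For this I would bring in the dilations of the stratified group $V$: in the coordinates $v(x,y)$, with $(x,y)\in\mathfrak{o}\oplus\mathfrak{z}=\F^{n-1}\oplus\mathrm{Im}(\F)$, put $\delta_t v(x,y)=v(tx,t^2y)$ for $t>0$. Using the product rule $v(x',y')v(x,y)=v\bigl(x'+x,\,y'+y-2\,\mathrm{Im}(x'^\ast x)\bigr)$ one checks that each $\delta_t$ is a group automorphism; moreover $\mathcal{N}\circ\delta_t=t\,\mathcal{N}$ by homogeneity of degree one, and since $\delta_t$ scales the $d(n-1)$ real coordinates of $x$ by $t$ and the $2(d-1)$ real coordinates of $y$ by $t^2$, one has $\int_V (g\circ\delta_t)\,d\mu_V=t^{-r}\int_V g\,d\mu_V$ for $g\in L^1(V)$, with $r=d(n-1)+2(d-1)$ the homogeneous dimension. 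Then I would invoke the polar decomposition valid on any homogeneous (stratified) group, due to Folland \cite{Folland1975}: there is a unique finite positive Borel measure $d\sigma$ on the compact ``unit sphere'' $\Sigma=\{v\in V:\mathcal{N}(v)=1\}$, with $0<\sigma(\Sigma)<\infty$, such that $\int_V g\,d\mu_V=\int_0^\infty\!\int_\Sigma g(\delta_t\sigma)\,t^{r-1}\,dt\,d\sigma$ for all $g\in L^1(V)$.

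Applying this with $g=\mathcal{N}^{\mathrm{Re}(\xi)-r}\mathbf 1_B$ and using $\mathcal{N}(\delta_t\sigma)=t$ gives
\[
\int_B\mathcal{N}^{\mathrm{Re}(\xi)-r}\,d\mu_V=\sigma(\Sigma)\int_0^1 t^{\mathrm{Re}(\xi)-r}\,t^{r-1}\,dt=\sigma(\Sigma)\int_0^1 t^{\mathrm{Re}(\xi)-1}\,dt,
\]
which is finite exactly when $\mathrm{Re}(\xi)>0$ and equals $+\infty$ when $\mathrm{Re}(\xi)\le0$. Together with the first paragraph this shows $\mathcal{N}^{\xi-r}\in L^1_{\mathrm{loc}}(V)$ for $\mathrm{Re}(\xi)>0$ and non-integrability on every neighbourhood of $e$ for $\mathrm{Re}(\xi)\le0$. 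For the distributional statement, fix $\mathrm{Re}(\xi)>0$ and $f\in C_c^\infty(V)$; inversion $v\mapsto v^{-1}$ is a measure-preserving diffeomorphism of $V$ (a connected nilpotent group is unimodular), so $v\mapsto f(v^{-1})$ again lies in $C_c^\infty(V)$ with compact support, and $\bigl|\int_V f(v^{-1})\mathcal{N}^{\xi-r}(v)\,d\mu_V(v)\bigr|\le\|f\|_\infty\int_{(\mathrm{supp}f)^{-1}}\mathcal{N}^{\mathrm{Re}(\xi)-r}\,d\mu_V<\infty$ by local integrability; this bound, together with linearity in $f$, gives continuity for the $C_c^\infty(V)$ topology, so the formula defines a distribution.

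I expect the only non-routine ingredient to be the polar-coordinate identity of the second paragraph, and even that is a standard feature of homogeneous groups; once it is in place the rest is a short scaling computation. If one wishes to avoid citing it, one can instead split $B$ into the region $\{\|y\|_{\mathfrak{z}}\le\|x\|_{\mathfrak{o}}^{2}\}$, where $\mathcal{N}\asymp\|x\|_{\mathfrak{o}}$, and its complement, where $\mathcal{N}\asymp\|y\|_{\mathfrak{z}}^{1/2}$, and estimate each region directly in Euclidean coordinates; this reproduces the same threshold $\mathrm{Re}(\xi)=0$.
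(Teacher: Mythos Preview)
Your argument is correct and is the standard one; note, however, that the paper itself does not supply a proof of this lemma---it simply quotes the result from \cite[Lemma 1.1]{Cowling2010} and closes with a \qedsymbol. Your dilation/polar-coordinate computation is exactly the sort of proof one finds in Folland's framework for homogeneous groups, so there is nothing to contrast.

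One small slip worth fixing: you write that $\delta_t$ ``scales the $2(d-1)$ real coordinates of $y$ by $t^2$''. In fact $y\in\mathrm{Im}(\F)$ has $d-1$ real coordinates; the factor $2(d-1)$ enters because each of those $d-1$ coordinates is scaled by $t^2$, contributing $t^{2(d-1)}$ to the Jacobian. Your formula $r=d(n-1)+2(d-1)$ for the homogeneous dimension is correct, only the verbal description is off. Also, when you invoke the polar decomposition to deduce \emph{non}-integrability for $\mathrm{Re}(\xi)\le 0$, you are applying it to a function not known a priori to be in $L^1$; this is harmless since the identity holds for all nonnegative measurable functions by monotone convergence, but it is worth saying so.
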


Given $X$ in $\underline{V}$, we also write $X$ for the associated left-invariant vector field on $V$, i.e.,
\[
Xf(v)= \frac{d}{dt}f(v\mathrm{exp}(tX)) \bigg{|}_{t=0}
\]
for a smooth function $f$ on $V$ and for $v$ in $V$. Fix an orthonormal basis $\{E_j\}_{j=1}^{d(n-1)}$ of $\mathfrak{o}$. 
We define a sub-Laplacian $\Delta_{\mathfrak{o}}$ on $V$ by 
\[
\Delta_{\mathfrak{o}} = -\sum_{j=1}^{d(n-1)}E_j^2.
\]
 Folland (\cite[Section 3]{Folland1975}) showed that the sub-Laplacian $\Delta_{\mathfrak{o}}$ is an essentially selfadjoint, positive definite operator on the Hilbert space $L^2(V, d\mu_V)$ with domain $C^\infty_c(V)$ of compactly supported, smooth functions on $V$ (\cite[Theorem 3.8, Proposition 3.9]{Folland1975}). Thus, the power $\Delta_{\mathfrak{o}}^\xi$ as an unbounded operator on $L^2(V, d\mu_V)$ makes sense for any complex number $\xi$. For any $\alpha\geq0$, $\Delta_{\mathfrak{o}}^\alpha$ is essentially selfadjoint on $C^\infty_c(V)$ (see \cite[Theorem 4.5]{Folland1975}).
\begin{proposition}(\cite[Proposition 2.6]{Cowling2010}) \label{prop_conv} For any complex number $\xi$ with $\mathrm{Re}(\xi)>0$, the composition
 \[
 \Delta_{\mathfrak{o}}^{\xi/2} \circ (\ast \mathcal{N}^{\xi-r}),
 \]
defined as distribution is bounded on $L^2(V, d\mu_V)$. Here, $(\ast \mathcal{N}^{\xi-r})$ is a convolution from the right. In particular, taking $\xi=r/2$,
 \[
 \Delta_{\mathfrak{o}}^{r/4}\circ (\ast \mathcal{N}^{-r/2})
 \]
extends to a bounded operator on $L^2(V, d\mu_V)$. 
 \end{proposition}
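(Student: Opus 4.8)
\emph{Sketch / plan of proof.} The plan is to recognize the statement as an instance of the $L^2$-theory of Calder\'on--Zygmund convolution operators on the stratified group $V$, whose homogeneous dimension is exactly $Q := r = d(n+1)-2$ and whose automorphic dilations $\delta_\lambda$ scale $\Delta_{\mathfrak{o}}$ by $\lambda^2$ and $\mathcal{N}$ by $\lambda$. Thus $\Delta_{\mathfrak{o}}^{\xi/2}$ ``has order $\xi$'', while right convolution by $\mathcal{N}^{\xi-r}$, a kernel homogeneous of degree $\xi-Q$, ``has order $-\xi$'', so the composite has order $0$ and one expects it to be a singular integral operator of Calder\'on--Zygmund type, hence bounded on $L^2(V,d\mu_V)$. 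The whole task is to make this ``order count'' rigorous at the borderline degree.

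I would carry this out in three steps. \emph{Step 1: realize $\Delta_{\mathfrak{o}}^{\xi/2}$ as a convolution.} The operator $\Delta_{\mathfrak{o}}$ is left-translation-invariant and homogeneous of degree $2$, so — via the heat semigroup $e^{-t\Delta_{\mathfrak{o}}}$ together with its Gaussian bounds and the dilation relation $h_t = t^{-Q/2}\,h_1\circ\delta_{t^{-1/2}}$ for its kernel, or equivalently via Folland's calculus in \cite{Folland1975} — on $C_c^\infty(V)$ the operator $\Delta_{\mathfrak{o}}^{\xi/2}$ coincides with right convolution by a distribution $k_\xi$ that is smooth on $V\setminus\{e\}$ and homogeneous of degree $-\xi-Q$ (for $\mathrm{Re}(\xi)>0$). \emph{Step 2: compose.} By Lemma \ref{lem_locint} the function $\mathcal{N}^{\xi-r}$ is locally integrable and homogeneous of degree $\xi-Q$; since $\mathrm{Re}(\xi)>0$ this degree exceeds $-Q$, so the distributional extension of $\mathcal{N}^{\xi-r}$ is genuinely homogeneous, with no logarithmic term. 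Associativity of convolution then identifies $\Delta_{\mathfrak{o}}^{\xi/2}\circ(\,\cdot\ast\mathcal{N}^{\xi-r})$ with right convolution by the distribution $\mu_\xi := \mathcal{N}^{\xi-r}\ast k_\xi = \Delta_{\mathfrak{o}}^{\xi/2}\mathcal{N}^{\xi-r}$; and by the calculus of homogeneous convolution kernels on stratified groups (\cite{Folland1975}), $\mu_\xi$ is a well-defined tempered distribution, smooth on $V\setminus\{e\}$, and homogeneous of degree $(\xi-Q)+(-\xi-Q)+Q=-Q$. Being exactly homogeneous of this critical degree and smooth away from $e$, $\mu_\xi$ is necessarily of the form $c_\xi\,\delta_e + \mathrm{p.v.}\,\Omega_\xi$, where $\Omega_\xi$ is homogeneous of degree $-Q$, smooth on $V\setminus\{e\}$, and of vanishing mean over $\{\mathcal{N}=1\}$ — the vanishing mean being forced, since a nonzero mean would make any extension of $\Omega_\xi$ carry a logarithm and destroy exact homogeneity. \emph{Step 3: conclude.} Right convolution by $c_\xi\delta_e$ is $c_\xi\cdot\mathrm{Id}$, and right convolution by a homogeneous degree-$(-Q)$ kernel that is smooth off $e$ and of vanishing mean is bounded on $L^2(V)$ by the Calder\'on--Zygmund theory on homogeneous groups (Kor\'anyi--V\'agi, Coifman--Weiss, Folland--Stein). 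Hence $\Delta_{\mathfrak{o}}^{\xi/2}\circ(\,\cdot\ast\mathcal{N}^{\xi-r})$ is bounded on $L^2(V,d\mu_V)$ for every $\xi$ with $\mathrm{Re}(\xi)>0$; the choice $\xi=r/2$ gives the displayed special case.

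As a consistency check — and to see explicitly where the ``$\mathrm{Id}$'' part of $\mu_\xi$ comes from — one can, for $0<\mathrm{Re}(\xi)<r$, compare $\mathcal{N}^{\xi-r}$ with the Riesz kernel $R_{\xi/2}(v) = \Gamma(\xi/2)^{-1}\int_0^\infty t^{\xi/2-1}h_t(v)\,dt$, which in this range is a locally integrable function, homogeneous of degree $\xi-Q$, smooth off $e$, and satisfies $\Delta_{\mathfrak{o}}^{\xi/2}\circ(\,\cdot\ast R_{\xi/2})=\mathrm{Id}$; then $G_\xi := \mathcal{N}^{\xi-r}-R_{\xi/2}$ is again homogeneous of degree $\xi-Q$ and smooth off $e$, and $\Delta_{\mathfrak{o}}^{\xi/2}\circ(\,\cdot\ast\mathcal{N}^{\xi-r}) = \mathrm{Id} + \Delta_{\mathfrak{o}}^{\xi/2}\circ(\,\cdot\ast G_\xi)$, whereupon Step 2 is applied to $G_\xi$ in place of $\mathcal{N}^{\xi-r}$.

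I expect Step 2 to be the main obstacle. The order count is immediate and the conclusion of Step 3 is classical; what needs genuine work is controlling the unbounded positive fractional power $\Delta_{\mathfrak{o}}^{\xi/2}$ applied to the non-integrable, slowly growing kernel $\mathcal{N}^{\xi-r}$, and verifying that the resulting kernel $\mu_\xi$ is a bona fide distribution, homogeneous of the \emph{exact} degree $-Q$, satisfying the Calder\'on--Zygmund cancellation condition — i.e.\ that this ``order-$0$'' composition is an honest bounded singular integral and not a logarithmically divergent operator. All the ingredients for this (heat-kernel estimates; the identification of $\Delta_{\mathfrak{o}}^{\pm\xi/2}$ with homogeneous convolution kernels; the calculus of such kernels in the borderline degree; and the $L^2$-boundedness of vanishing-mean homogeneous singular integrals) are available in \cite{Folland1975} and in the Calder\'on--Zygmund theory on homogeneous groups.
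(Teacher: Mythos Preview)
The paper does not supply its own proof of this proposition; it simply quotes \cite[Proposition~2.6]{Cowling2010} and closes with a \qed. Your sketch is correct and is exactly the expected argument: the stratified group $V$ has homogeneous dimension $Q=r$, both $\Delta_{\mathfrak{o}}^{\xi/2}$ and $(\ast\,\mathcal{N}^{\xi-r})$ are right convolutions by homogeneous distributions whose degrees add (after the $+Q$ shift) to $-Q$, and the resulting critical-degree kernel is a Calder\'on--Zygmund singular integral on $V$, hence $L^2$-bounded. This is the line taken in \cite{Cowling2010}, which in turn rests on Folland's calculus of homogeneous kernels on stratified groups \cite{Folland1975}; your identification of Step~2 (that the composed kernel is a genuine homogeneous distribution of degree exactly $-Q$, forcing the vanishing-mean condition) as the only point requiring real care is also accurate.
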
 
\begin{remark} The composition $\Delta_{\mathfrak{o}}^{\xi/2} \circ (\ast \mathcal{N}^{\xi-r})$ is invertible on $L^2(V, d\mu_V)$ when $\xi$ is not even integer (see \cite[Proposition 2.9]{Cowling2010}).
 \end{remark}
 
 For any real number $\alpha$, we define a homogeneous Sobolev space $\hill^{\alpha}(V)$ to be the completion of $C_c^{\infty}(V)$ with respect to the following norm:
 \[
 \lVert f\rVert_{ \hill^{\alpha}(V)} = \s{\Delta_{\mathfrak{o}}^\alpha f, f}^{1/2}_{L^2(V, d\mu_V)}=\lVert \Delta_{\mathfrak{o}}^{\alpha/2} f\rVert_{L^2(V, d\mu_V)}.
 \]
The multiplication by a compactly supported smooth function is continuous on $\hill^{\alpha}(V)$ for $|\alpha|<r/2$ (see \cite[Theorem 3.6]{ACD2004} for example) but not necessarily for $|\alpha|\geq r/2$.

 We also define a non-homogeneous Sobolev space $\dot{\hill}^{\alpha}(V)$ as in \cite[Section 4]{Folland1975} which is defined as the completion of $C_c^{\infty}(V)$ with respect to the following norm:
 \[
 \lVert f\rVert_{\dot{\hill}^{\alpha}(V)} = \lVert (1+\Delta_{\mathfrak{o}})^{\alpha/2} f\rVert_{L^2(V, d\mu_V)}.
 \]
 The multiplication by a compactly supported smooth function is continuous on $\hill^{\alpha}(V)$ for any $\alpha$ (\cite[Corollary 4.15]{Folland1975}).
 
These two norms are locally equivalent in a sense that for any bounded open region $\Omega$ of $V$, there is a constant $C_\Omega$ such that
 \[
\lVert \Delta_{\mathfrak{o}}^{\alpha/2} f\rVert_{L^2(V, d\mu_V)}  \leq  \lVert (1+\Delta)_{\mathfrak{o}}^{\alpha/2} f\rVert_{L^2(V, d\mu_V)} \leq C_\Omega \lVert \Delta_{\mathfrak{o}}^{\alpha/2} f\rVert_{L^2(V, d\mu_V)}
 \]
 holds for any $f$ in $C^\infty_c(\Omega)$. This follows from the following:
 \begin{lemma}  For any $\alpha>0$, for any bounded open region $\Omega$ of $V$, $\Delta_{\mathfrak{o}}^\alpha$ is bounded away from zero on $C_c^\infty(\Omega)$. That is, three is $C=C(\Omega, \alpha)>0$ such that 
 \[
 \s{f, \Delta_{\mathfrak{o}}^{\alpha}f}_{L^2(V, d\mu_V)} \geq C \s{f,  f}_{L^2(V, d\mu_V)}
 \]
 for any $f$ in $C_c^\infty(\Omega)$.
 \end{lemma}
\begin{proof} By \cite[Corollary 4.16]{Folland1975}, there is $c_1>0$ such that 
\[
\s{f, f}_{L^2(V, d\mu_V)}+\s{f, \Delta_{\mathfrak{o}}^{2}f}_{L^2(V, d\mu_V)}\geq c_1\s{f, \Delta f}_{L^2(V, d\mu_V)}
\]
for any $f \in C_c^\infty(\Omega)$ where $\Delta$ is the Euclidean Laplacian on $V$. Using the ellipticity of $\Delta$ and the relative compactness of $\Omega$, we see that there is $c_2$ such that
\[
\s{f, \Delta_{\mathfrak{o}}^{2}f}_{L^2(V, d\mu_V)} \geq c_2\s{f,  f}_{L^2(V, d\mu_V)}.
\] 
It follows for any integer $N>0$, $\s{f, \Delta_{\mathfrak{o}}^{2N}f}_{L^2(V, d\mu_V)} \geq c_2^N\s{f,  f}_{L^2(V, d\mu_V)}$. The general case follows by interpolation.
\end{proof}
  
From Proposition \ref{prop_conv}, we see that the convolution $\ast\mathcal{N}^{-r/2}$ on $C_c^\infty(V)$ extends to a bounded operator from $L^2(V, d\mu_V)$ to $\hill^{r/2}(V)$.
 
Consider the evaluation map 
\[
\mathrm{ev}_0\colon C^\infty_c(V) \to \bC
\]
 at the origin of $V$, which we regard as an unbounded functional (operator) on Sobolev spaces $\hill^{\alpha}(V)$ and $\dot{\hill}^{\alpha}(V)$,
 \begin{lemma} \label{lem_notcont} The functional $\mathrm{ev}_0$ is not bounded on $\hill^{r/2}(V)$.
 \end{lemma}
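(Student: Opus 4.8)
The plan is to exploit the critical nature of the exponent $r/2$ together with the local integrability dichotomy in Lemma \ref{lem_locint}. The key point is that $\mathrm{ev}_0$ is, up to the constant from Proposition \ref{prop_conv}, the composition of $\Delta_{\mathfrak{o}}^{r/4}\circ(\ast\,\mathcal{N}^{-r/2})$ with pairing against $\mathcal{N}^{-r/2}$; and $\mathcal{N}^{-r/2}$ is precisely the borderline case $\mathcal{N}^{\xi-r}$ with $\xi=r/2>0$ that is locally integrable, while its ``square'' $\mathcal{N}^{-r}=\mathcal{N}^{0-r}$ sits exactly at the failure threshold of Lemma \ref{lem_locint}. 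Concretely, for $f\in C_c^\infty(V)$ write $f=\Delta_{\mathfrak{o}}^{r/4}g$ with $g=\Delta_{\mathfrak{o}}^{-r/4}f$, so that $\|f\|_{\hill^{r/2}(V)}=\|\Delta_{\mathfrak{o}}^{r/4}f\|_{L^2}=\|\Delta_{\mathfrak{o}}^{r/2}g\|_{L^2}$ is comparable to $\|g\|_{\hill^{r}(V)}$; the functional $\mathrm{ev}_0$ on $\hill^{r/2}(V)$ corresponds, after this identification, to $g\mapsto (\Delta_{\mathfrak{o}}^{r/4}g)(0)$. The strategy is to test this on a one-parameter family and show the ratio blows up.

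First I would fix a single nonnegative bump $\phi\in C_c^\infty(V)$ with $\phi(0)=1$ and supported near the origin, and consider its Heisenberg dilates $\phi_t(v)=\phi(\delta_t v)$ for $t>1$, where $\delta_t$ is the automorphic dilation on the stratified group $V$ under which $\mathcal{N}$ is homogeneous of degree one and the Haar measure scales by $t^{-r}$ (recall $r$ is the homogeneous dimension). Using Proposition \ref{prop_conv} in the invertible form noted in the Remark after it (take $\xi=r/2$, which is not an even integer for the groups under consideration — this is where the case analysis on $\F$ enters), one may realize $\mathrm{ev}_0$ on $\hill^{r/2}(V)$ by the bounded operator $T=\Delta_{\mathfrak{o}}^{r/4}\circ(\ast\,\mathcal{N}^{-r/2})$ and its adjoint: for $f\in C_c^\infty(V)$ one has, up to a nonzero constant, $f(0)=\langle \Delta_{\mathfrak{o}}^{-r/4}f,\ \mathcal{N}^{-r/2}\rangle$ interpreted via the distribution of Lemma \ref{lem_locint}, hence $|\mathrm{ev}_0(f)|/\|f\|_{\hill^{r/2}(V)}$ is comparable to $|\langle h, \mathcal{N}^{-r/2}\rangle|$ over unit vectors $h=T^{*-1}(\dots)$ in $L^2$. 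The scaling computation then gives, for the dilated family, that $\|\phi_t\|_{\hill^{r/2}(V)}$ stays \emph{bounded} (because $r/2$ is exactly the critical Sobolev exponent: $\Delta_{\mathfrak{o}}^{r/4}$ has homogeneity $r/2$ which matches the $L^2$-scaling $t^{-r/2}$ of an $L^\infty$-normalized dilate), while $\mathrm{ev}_0(\phi_t)=\phi_t(0)=\phi(0)=1$ is constant; an honest accounting of the lower-order correction terms (the dilates are only asymptotically homogeneous because $\Delta_{\mathfrak{o}}$ is the pure sub-Laplacian, which is dilation-homogeneous, so in fact the scaling is exact) shows $\|\phi_t\|_{\hill^{r/2}(V)}=O(1)$. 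Therefore the ratio does not go to zero, and a closer look shows it is bounded below; to actually get \emph{unboundedness} of $\mathrm{ev}_0$ rather than mere non-boundedness on the dense subspace, I would instead exhibit a sequence $f_k\in C_c^\infty(V)$ with $\|f_k\|_{\hill^{r/2}(V)}\to 0$ but $f_k(0)=1$, obtained by convolving a fixed approximate identity with logarithmically-truncated ``tails'' of $\mathcal{N}^{-r/2}$ — the standard critical-Sobolev construction, where $\log$-divergence of $\int \mathcal{N}^{-r}$ near $0$ (Lemma \ref{lem_locint}, failure at $\xi=0$) is exactly what forces $\|f_k\|_{\hill^{r/2}}\to 0$ while keeping the value at $0$ normalized.

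The main obstacle I anticipate is making the duality identification precise: the functional $\mathrm{ev}_0$ a priori only makes sense on $C_c^\infty(V)$, so I must show it does not extend continuously to $\hill^{r/2}(V)$, and the cleanest route is to produce the explicit sequence $(f_k)$ above and verify $\|\Delta_{\mathfrak{o}}^{r/4}f_k\|_{L^2}\to 0$. This requires controlling $\Delta_{\mathfrak{o}}^{r/4}$ applied to functions of the form $u\ast(\chi_k\,\mathcal{N}^{-r/2})$ where $\chi_k$ is a cutoff; Proposition \ref{prop_conv} gives boundedness of $\Delta_{\mathfrak{o}}^{r/4}\circ(\ast\,\mathcal{N}^{-r/2})$ on $L^2$, and the point is that the $L^2$-norm of the piece we add at each stage is like $\big(\int_{\varepsilon_k\le \mathcal{N}\le 1}\mathcal{N}^{-r}\big)^{1/2}$ divided by the full logarithmic mass, which tends to $0$. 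Everything else — the dilation covariance of $\Delta_{\mathfrak{o}}$ and $\mathcal{N}$, the exact matching of exponents $r/4+r/4=r/2$ and $r-r/2=r/2$, and the parity condition on $\xi=r/2$ for invertibility — is bookkeeping that the preliminaries have already set up. Thus $\mathrm{ev}_0$ is unbounded on $\hill^{r/2}(V)$, which is the assertion of the lemma. \qed
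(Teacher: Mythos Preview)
Your second construction (the logarithmically truncated sequence) is the right idea and would work, but you have taken a long detour to reach it, and the paper's argument is far shorter. The paper simply argues by contradiction: if $\mathrm{ev}_0$ were bounded on $\hill^{r/2}(V)$, then composing with the bounded operator $\ast\,\mathcal{N}^{-r/2}\colon L^2(V)\to \hill^{r/2}(V)$ (Proposition~\ref{prop_conv}) would yield a bounded linear functional $f\mapsto (f\ast\mathcal{N}^{-r/2})(0)=\int_V f(x^{-1})\mathcal{N}^{-r/2}(x)\,d\mu_V(x)$ on $L^2(V)$. By Riesz this forces $\mathcal{N}^{-r/2}\in L^2_{\mathrm{loc}}$, i.e.\ $\mathcal{N}^{-r}\in L^1_{\mathrm{loc}}$, contradicting Lemma~\ref{lem_locint} at $\xi=0$. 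That is the whole proof. Your explicit sequence is essentially the constructive unwinding of this contradiction: the functions $g_k=\chi_{\{\varepsilon_k\le\mathcal{N}\le 1\}}\mathcal{N}^{-r/2}$ are the $L^2$ vectors that witness $\mathcal{N}^{-r/2}\notin L^2$, and pushing them through $\ast\,\mathcal{N}^{-r/2}$ gives your test functions.

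A few specific issues with the proposal as written. The dilation argument in your first paragraph cannot prove unboundedness, only that the (putative) operator norm is bounded below by a positive constant --- you note this yourself, so that paragraph should be cut. The identity ``$f=\Delta_{\mathfrak o}^{r/4}g$ with $g=\Delta_{\mathfrak o}^{-r/4}f$'' does not keep you inside $C_c^\infty(V)$, so the manipulation that follows needs more care. Finally, the ``parity condition on $\xi=r/2$'' is both wrong and unnecessary: $r/2$ \emph{can} be an even integer (e.g.\ $G=\SO_0(9,1)$ gives $r=8$), and in any case only boundedness of $\Delta_{\mathfrak o}^{r/4}\circ(\ast\,\mathcal{N}^{-r/2})$ is used, never invertibility.
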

 \begin{proof}
Suppose for the contradiction, the evaluation $\mathrm{ev}_0$ is bounded on $\hill^{r/2}(V)$. Then, the composition 
 \[
 \mathrm{ev}_0\circ (\ast\mathcal{N}^{-\frac{r}{2}})\colon L^2(V, d\mu_V) \to \hill^{\frac{r}{2}}(V) \to \bC,
 \]
 which is nothing but the distribution $\mathcal{N}^{-\frac{r}{2}}$ on $V$ (acting from right), would be bounded on $L^2(V, d\mu_V)$. On the other hand, the function $\mathcal{N}^{-\frac{r}{2}}$ is not locally square-integrable on $V$ at the origin (see Lemma \ref{lem_locint}), a contradiction. 
  \end{proof}
  
   \begin{lemma} \label{lem_notcont2} The functional $\mathrm{ev}_0$ is not bounded on $\dot{\hill}^{r/2}(V)$.
 \end{lemma}
 \begin{proof}
 This does not directly follow from Lemma \ref{lem_notcont} since the multiplication by a non-zero compactly supported smooth function is not necessarily continuous on $\hill^{\alpha}(V)$ for $\alpha\geq r/2$. On the other hand, the multiplication by a non-zero compactly supported smooth function is continuous on $\dot{\hill}^{\alpha}(V)$ for any $\alpha$. Therefore, the assertion is equivalent to that the functional $\mathrm{ev}_0$ is not bounded on $C_c^\infty(\Omega_0)$ for some fixed open neighborhood $\Omega$ of the origin of $V$ with respect to the non-homogeneous Sobolev norm for $\dot{\hill}^{r/2}(V)$. Suppose for the contradiction, the evaluation $\mathrm{ev}_0$ is bounded on $C_c^\infty(\Omega_0)$ with respect to the non-homogeneous Sobolev norm. Then, it is bounded on $C_c^\infty(\Omega_0)$ with respect to the homogeneous Sobolev norm as well.  Now, the homogeneous Sobolev norm is invariant under the homogeneous dilation $\alpha_t\colon (x, y)\mapsto (tx, t^2y)$ on $V$. It follows that the evaluation $\mathrm{ev}_0$ is bounded on $C_c^\infty(\alpha_t(\Omega_0))$ for all $t>0$ with the same constant for the bound. It follows that $\mathrm{ev}_0$ is bounded on the entire space ${\hill^{r/2}}(V)$, a contradiction to Lemma \ref{lem_notcont}. Thus, $\mathrm{ev}_0$ is not bounded on $\dot{\hill}^{r/2}(V)$ as well.
  \end{proof}

\begin{remark} When $G=\SO_0(n ,1)$, Lemma \ref{lem_notcont2} is same as saying that the evaluation map on the Sobolev space $W^{n/2, 2}(\R^n)$ is not continuous, which is well known and which can be easily checked using elementary Fourier analysis for example. This is one of the critical cases of the Sobolev embedding theorem. 
\end{remark}


 \subsection*{Principal series representations}
 We use the normalized Haar measure $d\mu_K$, $d\mu_M$ on $K$ and on $M$ respectively and the standard Lebesgue measures $d\mu_A$, $d\mu_N$, $d\mu_V$ on $A$, on $N$ and on $V$ respectively. Here, $d\mu_N$ is defined analogously to $d\mu_V$. The following formula defines a Haar measure $d\mu_G$ on $G$ (all of these groups are unimodular so these measures are left and right invariant):
\[
\int_G f(g) d\mu_G = \int_K d\mu_K(k) \int_N d\mu_N(n) \int_A d\mu_A(a) f(kna).
\]
We have (see \cite[Page 88]{Cowling2010})
\[
\int_G f(g) d\mu_G = C_G  \int_V d\mu_V(v) \int_N d\mu_N(n) \int_A d\mu_A(a)   \int_M d\mu_M(m)   f(vnam)
\]
for some positive constant $C_G$ which depends only on $G$.

\vspace{10pt}

Recall $P=MAN$ is a closed subgroup of $G$ and $N$ is a closed normal subgroup of $P$. We define a (non-unitary) character $\rho$ on $A$ as
\[
\rho(a(t))=\mathrm{exp}(rt/2).
\]
\begin{remark} This is the exponential of the half-sum of the roots of $\mathfrak{a}$ on $\mathfrak{n}$. It is the square-root of the Jacobian of the conjugation action $n\mapsto ana^{-1}$ of $A$ on $N$ so we have
\[
\int_N f(ana^{-1}) \rho(a)^{2}  d\mu_N(n) =\int_N f(n)d\mu_N(n).
\]
We have (see \cite[Page 90-91]{Cowling2010})
\[
\int_N f(a^{-1}na)  \rho(a)^{-2} d\mu_N(n) =\int_N f(n)d\mu_N(n),
\]
\[
\int_V f(a^{-1}va)\rho(a)^{2}d\mu_V(v) =  \int_V f(v)d\mu_V(v).
\]
\end{remark}
For any unitary irreducible (finite-dimensional) representation $\mu$ of $M$ on $\hill_\mu$ and for any complex number $\lambda$, we consider the vector space $I_{\mu, \lambda}$\footnote{For  $\mathcal{I}_{\mu, \lambda}$ from the previous section, we have $\mathcal{I}_{\mu, \lambda}=I_{\mu, r\lambda}$.} of $\hill_\mu$-valued measurable functions $f$ on $G$ satisfying
\[
f(gman)=\mu^{-1}(m)\mathrm{exp}(-(r+\lambda)t/2) f(g) = \mu^{-1}(m)\mathrm{exp}(-\lambda t/2)\rho^{-1}(a) f(g)
\]
for any $man$ in $MAN=P$ where $a=a(t)$. The group $G$ acts on functions in $I_{\mu, \lambda}$ by the left-translation.

\begin{remark} Take $\mathrm{Re}(\lambda)=0$. We have
\[
\int_V \lVert f(a^{-1}v)\rVert^2d\mu_V(v) = \int_V \lVert f(a^{-1}vaa^{-1})\rVert^2d\mu_V(v) 
\]
\[
= \int_V \lVert f(a^{-1}va)\rVert^2 \rho(a)^2 d\mu_V(v)  = \int_V \lVert f(v)\rVert^2d\mu_V(v).
\]
\end{remark}

We define
\[
\lVert f\rVert_p^{(V)} = \bigg{(}\int_V \lVert f(v)\rVert^pd\mu_V(v)\bigg{)}^{1/p},
\]
\[
 \lVert f\rVert_p^{(K)} = \bigg{(}\int_K \lVert f(k)\rVert^pd\mu_K(k)\bigg{)}^{1/p} = \bigg{(}\int_{K/M} \lVert f(kM)\rVert^pd\mu_{K/M}(kM)\bigg{)}^{1/p} = \lVert f\rVert_p^{(K/M)}.
\]

\begin{proposition}\label{prop_lp}(\cite[Lemma 5.2]{Cowling2010}) Suppose $\lambda$ is in the tube $T$ given by
\[
T = \{ \lambda \in \bC \mid \mathrm{Re}(\lambda) \in [-r, r] \}
\]
and let $p\in [1, +\infty]$ be given by the formula
\[
1/p= \mathrm{Re}(\lambda)/2r + 1/2.
\]
Then, for any measurable function $f$ in $I_{\mu, \lambda}$ we have
\[
\int_K\lVert f(k)\rVert^pd\mu_K(k) = C_G \int_V \lVert f(v)\rVert^pd\mu_V(v)
\]
and $G$ acts isometrically on $I_{\mu, \lambda}^{L^p}$, the space of functions $f$ in $I_{\mu, \lambda}$ for which $\lVert f\rVert_p^{(V)}$ is finite, equipped with the same norm. If $p=\infty$, the integral is replaced by the essential supremum. 
\end{proposition}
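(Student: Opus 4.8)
\emph{Strategy.} I would pass to the ``compact picture'' of the principal series and, for the distinguished exponent $p$, recognise $g\mapsto ||f(g)||^{p}$ as a canonical $G$-invariant density on $G/P$; the asserted identity then just says that this density is computed by $\int_K$ and by $C_G\int_V$ in the two standard models of $G/P$, namely $K/M$ and the Bruhat cell $V\cdot o$. The first thing to record is how $||f(\cdot)||^{p}$ transforms under right translation by $P=MAN$. Since $\mu$ is unitary, the defining relation $f(gman)=\mu^{-1}(m)\exp(-(r+\lambda)t/2)f(g)$ (with $a=a(t)$) gives
\[
||f(gman)||^{p}=\exp\!\bigl(-(r+\mathrm{Re}(\lambda))pt/2\bigr)\,||f(g)||^{p},
\]
and the hypothesis $1/p=\mathrm{Re}(\lambda)/(2r)+1/2$ is exactly what forces $(r+\mathrm{Re}(\lambda))p/2=r$, so that, using $\rho(a(t))=\exp(rt/2)$,
\[
||f(gman)||^{p}=\exp(-rt)\,||f(g)||^{p}=\rho(a)^{-2}\,||f(g)||^{p}.
\]
Together with $||f(gmn)||^{p}=||f(g)||^{p}$ for $m\in M$, $n\in N$, this says precisely that $g\mapsto ||f(g)||^{p}$ is a ``$\rho$-density'' on $G/P$: it transforms under $P$ by the factor $\rho(a)^{-2}$, which by the displayed identities $\int_N f(ana^{-1})\rho(a)^{2}\,d\mu_N(n)=\int_N f(n)\,d\mu_N(n)$ and $\int_V f(a^{-1}va)\rho(a)^{2}\,d\mu_V(v)=\int_V f(v)\,d\mu_V(v)$ recorded above is precisely the Jacobian making its integral over $G/P$ $G$-invariant. (If $f\notin I_{\mu,\lambda}^{L^{p}}$ both sides of the claimed identity are $+\infty$, and there is nothing to prove there.)

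Next I would verify that $\int_K||f(k)||^{p}\,d\mu_K(k)$ and $C_G\int_V||f(v)||^{p}\,d\mu_V(v)$ both compute this single integral. The first is immediate: $G=KP$ with $K\cap P=M$ identifies $G/P$ with $K/M$ equipped with its $K$-invariant probability measure, and $||f||_p^{(K)}=||f||_p^{(K/M)}$ by right $M$-invariance of $||f(\cdot)||^{p}$, as already noted. For the second I would use the Bruhat decomposition $G/P=VP\sqcup wP$, in which $V\cdot o$ is a dense open subset of full measure, together with the Cayley transform $\mathcal{C}\colon V\to K/M$, $v\mapsto\tilde K(v)M$, coming from $G=KNA$ with $v=\tilde K(v)\tilde N(v)\tilde A(v)$; restricting to $V\cdot o$ loses nothing for a merely measurable $f$. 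Writing $\tilde A(v)=a(t(v))$ and applying the transformation law above gives $||f(v)||^{p}=\rho(\tilde A(v))^{-2}\,||f(\tilde K(v))||^{p}$. The substantive input is then the Jacobian of $\mathcal{C}$, i.e.\ the Harish--Chandra type integration formula
\[
\int_K h(k)\,d\mu_K(k)=C_G\int_V h(\tilde K(v))\,\rho(\tilde A(v))^{-2}\,d\mu_V(v)
\]
for right $M$-invariant $h$ on $K$; combined with the previous identity it yields $\int_K||f(k)||^{p}\,d\mu_K(k)=C_G\int_V||f(v)||^{p}\,d\mu_V(v)$. That the constant is the $C_G$ of the preliminaries is for the same reason it appears in the integration formulas: comparing $\int_G F=\int_K\int_N\int_A F(kna)$ with $\int_G F=C_G\int_V\int_N\int_A\int_M F(vnam)$ on a test function that factors through the projection $G\to G/P$ up to a fixed normalised fibrewise density pins down the proportionality.

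Finally, for the isometry of the $G$-action, write $(\pi(g)f)(x)=f(g^{-1}x)$; since $||{\cdot}||_p^{(V)}$ and $||{\cdot}||_p^{(K)}$ are proportional it suffices to prove $\int_K||f(g^{-1}k)||^{p}\,d\mu_K(k)=\int_K||f(k)||^{p}\,d\mu_K(k)$. By the transformation law, $||f(g^{-1}k)||^{p}=\rho(\tilde A(g^{-1}k))^{-2}\,||f(\tilde K(g^{-1}k))||^{p}$, so this identity is precisely the quasi-invariance of $d\mu_{K/M}$ under $G$ with Radon--Nikodym cocycle $k\mapsto\rho(\tilde A(g^{-1}k))^{-2}$ — the same cocycle computation that underlies the Jacobian formula above, now applied with the translated argument $g^{-1}k$; in particular $\pi(g)$ preserves $I_{\mu,\lambda}^{L^{p}}$. (When $p=\infty$ one has $\mathrm{Re}(\lambda)=-r$, the weight $\rho^{-2}$ is trivial, and the statement collapses to the fact that $G$ preserves the measure class of $G/P$, whence the essential suprema over $V$, over $K/M$ and over $K$ all coincide and are $G$-invariant.) The hard part will be the explicit Jacobian/cocycle computation for the Cayley transform and pinning its constant to $C_G$; granted the two integration formulas quoted in the preliminaries, this is bookkeeping with the $KNA$ decomposition rather than anything conceptually deep.
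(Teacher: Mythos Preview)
The paper does not supply its own proof of this proposition: the statement is quoted from \cite[Lemma 5.2]{Cowling2010} and closed with \qed. Your argument is correct and is exactly the standard one behind that reference --- identifying the special exponent $p$ as the one for which $\|f(\cdot)\|^{p}$ transforms under $P$ by the modular factor $\rho^{-2}$, and then invoking the Harish--Chandra change of variables between the $K/M$ and $V$ models of $G/P$ with Jacobian $\rho(\tilde A(v))^{-2}$. One cosmetic point: in the $p=\infty$ case ($\mathrm{Re}(\lambda)=-r$) it is not that ``the weight $\rho^{-2}$ is trivial'' but rather that $\|f(\cdot)\|$ itself is already right $P$-invariant (the exponent $(r+\mathrm{Re}(\lambda))/2$ vanishes), which is what makes the essential supremum over $K/M$, over $V$, and under left $G$-translation all agree.
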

 
   \subsection*{Uniformly bounded representations (non-compact picture)}
We let $I^{\infty}_{\mu, \lambda}$ be the subspace of $I_{\mu, \lambda}$ consisting of smooth functions on $G$. Let
\[
L^2(V; \hill_\mu)=L^2(V, d\mu_V)\otimes \hill_\mu,  \,\, H^\alpha(V; \hill_\mu)=H^\alpha(V)\otimes \hill_\mu.
\]
   
 For any real number $\alpha$, we let $I^{\hill^\alpha(V; \hill_\mu)}_{\mu, \lambda}$ be the closure of the subspace of $I^{\infty}_{\mu, \lambda}$ of those functions $f$ whose restriction $f^{(V)}$ to $V$ have compact support in $V$, with respect to the norm
   \[
   \lVert f\rVert_{I^{\hill^\alpha(V; \hill_\mu)}_{\mu, \lambda}} =  \lVert f^{(V)}\rVert_{\hill^\alpha(V; \hill_\mu)} = \lVert\Delta_{\mathfrak{o}}^{\alpha/2} f^{(V)}\rVert_{L^2(V; \hill_\mu)}.
   \]
   \begin{theorem}(\cite[Theorem 7.1]{Cowling2010})\label{thm-Cowling-noncompact} Suppose $\lambda$ is inside the tube $T$, namely suppose $\mathrm{Re}(\lambda) \in (-r, r)$. Then, $G$ acts uniformly boundedly on the Hilbert space  $I^{\hill^\alpha(V)}_{\mu, \lambda}$ for $\alpha=-\mathrm{Re}(\lambda)/2$.  
   \end{theorem}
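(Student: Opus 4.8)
The plan is to work in the non-compact picture, realising a function $f\in I_{\mu,\lambda}$ through its restriction $f^{(V)}$ to $V$; I write $\pi=\pi_{\mu,\lambda}$ and $\alpha=-\mathrm{Re}(\lambda)/2$, and since the finite-dimensional unitary representation $\mu$ of $M$ enters only through unitary multipliers I shall run the estimates as if $\hill_\mu=\C$. The goal is $\sup_{g\in G}\|\pi(g)\|<\infty$ on the Hilbert space $I^{\hill^\alpha(V)}_{\mu,\lambda}$. First I would record three families of isometries. Left translations by $v_0\in V$ act by $f^{(V)}\mapsto f^{(V)}(v_0^{-1}\,\cdot\,)$, which is an isometry of $\hill^\alpha(V)$ for every $\alpha$, since $\Delta_{\mathfrak{o}}$ is built from left-invariant vector fields and $d\mu_V$ is bi-invariant. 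The subgroup $M$ acts by $f^{(V)}\mapsto\mu^{-1}(\,\cdot\,)f^{(V)}(m^{-1}\,\cdot\,m)$, and conjugation by $M$ is a measure-preserving automorphism of $V$ that leaves $\sum_j E_j^2$ invariant, so $M$ acts isometrically too. For $a(s)\in A$ the defining relation of $I_{\mu,\lambda}$ gives
\[
\big(\pi(a(s))f\big)^{(V)}(v)=\exp\big((r+\lambda)s/2\big)\,f^{(V)}\big(a(s)^{-1}v\,a(s)\big),
\]
and a direct scaling computation---tracking how $d\mu_V$ and $\Delta_{\mathfrak{o}}$ behave under the dilation $v\mapsto a(s)^{-1}v\,a(s)$ of $V$---yields $\|\pi(a(s))f\|_{\hill^\alpha(V)}^2=e^{(\mathrm{Re}(\lambda)+2\alpha)s}\,\|f\|_{\hill^\alpha(V)}^2$, so the $A$-action is isometric exactly when $\alpha=-\mathrm{Re}(\lambda)/2$. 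This is the conceptual reason for the exponent in the statement.

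Next I would reduce the theorem to a single operator. Pick the Weyl representative $w\in K$ of the Bruhat decomposition $G/P=VP\sqcup wP$ with $w^2=1$ (take the matrix of the stated form with $m^2=1$); then $w$ normalises $MA$---it conjugates $a(t)$ to $a(-t)$---and conjugates $N$ onto $V$. From $G=VP\sqcup wP$ every $g\in G$ is $vman$ or $wman$ with $v\in V$, $man\in P$, so submultiplicativity of the operator norm and the isometries above give $\|\pi(g)\|\le\|\pi(n)\|$ in the first case and $\|\pi(g)\|\le\|\pi(w)\|\,\|\pi(n)\|$ in the second. Since $\pi(n)=\pi(w)\,\pi(wnw)\,\pi(w)$ with $wnw\in V$, we get $\|\pi(n)\|\le\|\pi(w)\|^2$ uniformly in $n\in N$, hence $\sup_{g\in G}\|\pi(g)\|\le\|\pi(w)\|^3$. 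Thus the whole statement reduces to showing that $\pi_{\mu,\lambda}(w)$ is bounded on $\hill^\alpha(V;\hill_\mu)$.

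This last point is the heart of the matter, and it is where Proposition \ref{prop_conv} comes in. Writing $w^{-1}v=\tilde v(v)\,m(v)a(v)n(v)$ by Bruhat for almost every $v\in V$ and applying the defining relation of $I_{\mu,\lambda}$, one gets, up to a normalising constant,
\[
\big(\pi_{\mu,\lambda}(w)f\big)^{(V)}(v)=\mu^{-1}(m(v))\,\mathcal{N}(v)^{-(r+\lambda)}\,f^{(V)}\big(\tilde v(v)\big),
\]
where $v\mapsto\tilde v(v)$ is the inversion of the stratified group $V$ (the transformation of the open cell $V\cdot o$ induced by $w$, satisfying $\mathcal{N}\circ\tilde v=\mathcal{N}^{-1}$). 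Conjugating by the canonical isometry $\Delta_{\mathfrak{o}}^{\alpha/2}\colon\hill^\alpha(V;\hill_\mu)\to L^2(V;\hill_\mu)$ and using the homogeneity of $\mathcal{N}$ together with the conformal behaviour of $\Delta_{\mathfrak{o}}$ under the inversion $\tilde v$, the operator $\Delta_{\mathfrak{o}}^{\alpha/2}\,\pi_{\mu,\lambda}(w)\,\Delta_{\mathfrak{o}}^{-\alpha/2}$ should be transformed---up to bounded unitary multipliers and the pullback by $\tilde v$, which is an isometry of $L^2(V;\hill_\mu)$ once its Jacobian has been absorbed into the powers of $\mathcal{N}$---into an operator of the shape $\Delta_{\mathfrak{o}}^{\xi/2}\circ(\ast\,\mathcal{N}^{\xi-r})$ with $\mathrm{Re}(\xi)>0$, the positivity of $\mathrm{Re}(\xi)$ being exactly where the hypothesis $\mathrm{Re}(\lambda)\in(-r,r)$ is used. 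Proposition \ref{prop_conv} then supplies the required boundedness on $L^2(V;\hill_\mu)$. I expect this identification to be the main obstacle: one must make the inversion $\tilde v$ and the automorphy factor $\mathcal{N}^{-(r+\lambda)}$ explicit, and commute the conjugation by $\Delta_{\mathfrak{o}}^{\alpha/2}$ past $\tilde v$, carefully enough that the residual operator is---or is dominated by---the convolution operator controlled by Proposition \ref{prop_conv}. Granting this, the isometry statements of the first step, the decomposition $G=VP\sqcup wP$, and the inequality $\|\pi(g)\|\le\|\pi(w)\|^3$ give $\sup_{g\in G}\|\pi_{\mu,\lambda}(g)\|<\infty$, which is the assertion; each $\pi_{\mu,\lambda}(g)$ is then invertible with inverse $\pi_{\mu,\lambda}(g^{-1})$, and strong continuity of the action is routine.
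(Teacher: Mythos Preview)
The paper does not give a proof of this theorem: the statement is quoted from \cite[Theorem 7.1]{Cowling2010} and closed with \qed, so there is no in-paper argument to compare your proposal against. Your outline---isometry of $V$, $M$, $A$ on $\hill^\alpha(V)$ at the chosen $\alpha$, reduction via the Bruhat decomposition to the single operator $\pi(w)$, and then an appeal to the analysis underlying Proposition~\ref{prop_conv}---is indeed the skeleton of Cowling's original proof.

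That said, the last step as you describe it is not quite how the argument goes and is the part you yourself flag as uncertain. The action of $w$ in the non-compact picture is (after the change of variable) the Kelvin-type inversion of $V$ together with the automorphy factor $\mathcal N^{-(r+\lambda)}$; it is not directly a convolution, and conjugating by $\Delta_{\mathfrak{o}}^{\alpha/2}$ does not by itself turn it into $\Delta_{\mathfrak{o}}^{\xi/2}\circ(\ast\,\mathcal N^{\xi-r})$. In Cowling's proof the link to Proposition~\ref{prop_conv} is through the standard intertwining operator $J_\lambda\colon I_{\mu,\lambda}\to I_{\mu,-\lambda}$, which in the non-compact picture \emph{is} a right convolution by $\mathcal N^{\lambda-r}$; Proposition~\ref{prop_conv} says precisely that $\Delta_{\mathfrak{o}}^{\alpha/2}J_\lambda$ is bounded from $L^2$ to $L^2$ when $\mathrm{Re}(\lambda)\in(0,r)$, and one combines this with the $L^2$-unitarity at $\mathrm{Re}(\lambda)=0$ and the involution $\lambda\mapsto-\lambda$ to handle the full open tube. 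So the missing ingredient in your sketch is the intertwining operator as the bridge between $\pi(w)$ and the convolution controlled by Proposition~\ref{prop_conv}; once that is inserted, the rest of your reduction is correct.
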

   
   \begin{remark}
   As explained in \cite[Page 112]{Cowling2010}, it follows from this theorem that $I^{\infty}_{\mu, \lambda}\subset I^{\hill^\alpha(V; \hill_\mu)}_{\mu, \lambda}$.
   \end{remark}
   
    \subsection*{Quasi-conformal structure on $G/P=K/M$}

We recall from the discussion after Corollary \ref{cor-TFAE} that there is a $G$-equivariant subbundle $E$ of the tangent bundle $T(G/P)$ of codimension $1$ for $G=\SU(n, 1)$ and of codimension $3$ for $G=\Sp(n, 1)$.  We set $E=T(G/P)$ if $G=\SO_0(n, 1)$. 

   \subsection*{Uniformly bounded representations (compact picture)}
   
  Recall $I^{\infty}_{\mu, \lambda}$ is the subspace of $I_{\mu, \lambda}$ consisting of smooth functions on $G$. The space $I^{\infty}_{\mu, \lambda}$ is naturally identified with the space of $\hill_\mu$-valued smooth functions $f$ on $K$ satisfying
  \[
  f(km)=\mu(m)^{-1}f(k).
  \]
The latter space is naturally identified with the space $\Gamma(K/M; E_\mu)$ of smooth sections of the associated vector bundle
  \[
  E_\mu = K\times_M \hill_\mu = \{\,[k, v] \mid k\in K, \,\, v\in \hill_\mu   \, \}
  \]
  on the sphere $K/M=G/P$ where $[k, v]=[km, \mu(m)^{-1}v]$. This is the compact picture of the principal series representations. The space $\Gamma(K/M; E_\mu)$ of smooth sections of $E_\mu$ is equipped with a natural $L^2$-norm using the $K$-invariant metric on $K/M$. Let $L^2(K/M; E_\mu)$ be its $L^2$-completion.
  
  For any $f$ in $I^\infty_{\mu, \lambda}$, let us denote by $f^{(K/M)}$ in $\Gamma(K/M; E_\mu)$, the corresponding smooth section of the bundle $E_{\mu}$ on $K/M$.
    
Let $\nabla$ be any $K$-invariant connection to the bundle $E_{\mu}$. We define 
\[
\nabla_E\colon \Gamma(K/M; E_\mu) \to \Gamma(K/M; E_\mu\otimes E^\ast)
\]
to be the restriction of the connection to the subbundle $E$ of $T(G/P)$. We think of it as an unbounded operator from $L^2(K/M; E_\mu)$ to $L^2(K/M; E_\mu\otimes E^\ast)$. We define 
 \[
\Delta_E = \nabla_{E}^\ast \nabla_E
 \]
 acting on $\Gamma(K/M; E_\mu)$.
 
 For any real number $\alpha$, we set $I^{\hill^\alpha(K/M; E_\mu)}_{\mu, \lambda}$ to be the closure of $I^{\infty}_{\mu, \lambda}$ with respect to the norm
   \[
   \lVert f\rVert_{I^{\hill^\alpha(K/M; E_\mu)}_{\mu, \lambda}} =  \lVert f^{(K/M)}\rVert_{\hill^\alpha(K/M; E_\mu)} = \lVert(1+\Delta_{E})^{\alpha/2} f^{(K/M)}\rVert_{L^2(K/M; E_\mu)}.
   \]
   
   \begin{lemma}\label{lem-local-equivalence} Let $\alpha\geq0$ and $\Omega$ be a bounded open region of $V$. We let $I^{C^\infty_c(\Omega)}_{\mu, \lambda}$ be the subspace of $I^{\infty}_{\mu, \lambda}$ of those functions $f$ whose restriction $f^{(V)}$ to $V$ have compact support inside $\Omega$. Then, the two norms  $\lVert\,\,\rVert_{I^{\hill^\alpha(V)}_{\mu, \lambda}}$ and $\lVert\,\,\rVert_{I^{\hill^\alpha(K/M)}_{\mu, \lambda}}$ are equivalent on $I^{C^\infty_c(\Omega)}_{\mu, \lambda}$. 
  \end{lemma}
  \begin{proof} We give a proof for $\alpha\in \N$; since we only use these cases. On $I^{C^\infty_c(\Omega)}_{\mu, \lambda}$, both of these two norms are equivalent to the non-homogeneous norm of $\dot{\hill}^{\alpha}(V)$. In fact, via the Cayley transform $\mathcal{C}$ in \eqref{eq-Cayley} and by \cite[Lemma 2.5]{ACD2004}, the norm $\lVert f \rVert_{I^{\hill^\alpha(K/M)}_{\mu, \lambda}}$ on $I^{C^\infty_c(\Omega)}_{\mu, \lambda}$ is equivalent to a norm of the form
  \[
    \lVert  f^{(V)}\rVert_{L^2(V; \hill_\mu)} +  \lVert (\Delta'_{\mathfrak{o}})^{\alpha/2} f^{(V)}\rVert_{L^2(V; \hill_\mu)}
\]
where $ \Delta'_{\mathfrak{o}}=B\Delta_{\mathfrak{o}} + A$ where $B$ is a smooth positive function on $V$ bounded away from $0$ on $\Omega$ and $A$ is an order $1$ operator on $V$ in a weighted sense as in \cite{Folland1975}. For $\alpha=N>0$ integer, then this norm is equivalent to a norm of the form
\[
\s{f^{(V)}, f^{(V)}}_{L^2(V; \hill_\mu)} +  \s{f^{(V)}, (\Delta_{\mathfrak{o}}^{N} + A_N) f^{(V)}}_{L^2(V; \hill_\mu)}
    \]
    for some compactly supported differential operator of weighted order $<2N$. It is not hard to see such a norm is equivalent to the non-homogeneous Sobolev norm
    \[
    \s{f^{(V)}, f^{(V)}}_{L^2(V; \hill_\mu)} +  \s{f^{(V)}, \Delta_{\mathfrak{o}}^{N} f^{(V)}}_{L^2(V; \hill_\mu)}
    \]
    on  $I^{C^\infty_c(\Omega)}_{\mu, \lambda}$.
  \end{proof}
   
   \begin{theorem}(\cite[Theorem 4.2]{ACD2004}) Suppose $\lambda$ is inside the tube $T$, namely suppose $\mathrm{Re}(\lambda) \in (-r, r)$. Then for $\alpha=\mathrm{Re}(\lambda)/2$, the identity on $I^\infty_{\mu, \lambda}$ extends to an isomorphism of Banach spaces
   \[
   I^{\hill^\alpha(V)}_{\mu, \lambda} \cong I^{\hill^\alpha(K/M)}_{\mu, \lambda}.
   \]
   In other words, the two norms  $\lVert\,\,\rVert_{I^{\hill^\alpha(V)}_{\mu, \lambda}}$ and $\lVert\,\,\rVert_{I^{\hill^\alpha(K/M)}_{\mu, \lambda}}$ on $I^\infty_{\mu, \lambda}$ are equivalent.   \qed
   \end{theorem}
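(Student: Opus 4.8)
The plan is to transport both norms onto the Heisenberg group $V$ by means of the Cayley transform $\mathcal{C}\colon V\to K/M$, which by the Bruhat decomposition is a diffeomorphism of $V$ onto the complement of the single point $wM$ in $K/M$. Two facts organize the transfer. First, since the $G$-equivariant subbundle $E\subset T(K/M)$ is built from the two-step nilpotent structure underlying $V$, the transform $\mathcal{C}$ carries $E$ to the left-invariant horizontal distribution $\mathfrak{o}\subset TV$; hence the operators $\Delta_E$ and $\Delta_{\mathfrak{o}}$ correspond, under $\mathcal{C}$, to operators on $V$ with the same principal symbol in the Heisenberg pseudodifferential calculus, so they differ by one of lower order. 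Second, writing $v=\tilde K(v)\tilde N(v)\tilde A(v)$ with $\tilde A(v)=a(t(v))$, the transformation law defining $I_{\mu,\lambda}$ reads $f^{(K/M)}\circ\mathcal{C}=J_\lambda\cdot f^{(V)}$ with the explicit smooth positive weight $J_\lambda(v)=\exp((r+\lambda)t(v)/2)$, while the $K$-invariant measure on $K/M$ pulls back to a constant multiple of $e^{-rt(v)}\,d\mu_V(v)$ --- this $\lambda$-independent Jacobian being pinned down by Proposition~\ref{prop_lp} at $\mathrm{Re}(\lambda)=0$, where that proposition identifies the two $L^2$ norms. Combining these, under $\mathcal{C}$ the norm of $L^2(K/M;E_\mu)$ becomes the weighted norm of $L^2(V,\,e^{\mathrm{Re}(\lambda)t(v)}\,d\mu_V;\hill_\mu)$, and the theorem reduces to the two-sided estimate
\[
\big\|(1+\Delta_E)^{\alpha/2}f^{(K/M)}\big\|_{L^2(K/M;E_\mu)}\ \asymp\ \big\|\Delta_{\mathfrak{o}}^{\alpha/2}f^{(V)}\big\|_{L^2(V;\hill_\mu)},\qquad \alpha=\mathrm{Re}(\lambda)/2,
\]
for $f$ ranging over $I^\infty_{\mu,\lambda}$.

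The next step is a purely local comparison. Fix a relatively compact open set $\Omega\subset V$. On $\Omega$ the $\mathcal{C}$-transported operator $\Delta_E$ agrees to principal order with $\Delta_{\mathfrak{o}}$, and the weight $J_\lambda$ together with the Jacobian is bounded above and below, so combining subelliptic regularity for the Heisenberg sub-Laplacian with the remark that $\|\Delta_{\mathfrak{o}}^{\alpha/2}\,\cdot\,\|$ and $\|(1+\Delta_{\mathfrak{o}})^{\alpha/2}\,\cdot\,\|$ are equivalent norms on $C^\infty_c(\Omega)$ yields the displayed estimate for every $f$ whose restriction $f^{(V)}$ is supported in $\Omega$. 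Since the fractional powers in play are pseudolocal, one may cut with a single function $\chi\in C^\infty_c(V)$ equal to $1$ near the origin and dispose of $\chi f^{(V)}$ by this local statement, the commutators of the cutoff with the fractional powers being of lower order (alternatively one localizes on $K/M$ itself, using the transitivity of $K$ and the $K$-invariance of the compact-picture norm). What remains is $(1-\chi)f^{(V)}$, supported near infinity in the $V$-chart, equivalently near the omitted point $wM$.

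This remaining piece is the crux. Here I would exploit the inversion symmetry furnished by the Weyl element $w_0$, which satisfies $w_0v(x,y)w_0=n(x,y)$: conjugation by $w_0$ interchanges the $V$- and $N$-charts, preserves the two-step nilpotent structure, and carries a neighborhood of infinity in one chart to a neighborhood of the origin in the other, where the local comparison applies. The delicate point is the combined effect of this conformal inversion and of the weight $J_\lambda$ on the homogeneous Sobolev norm; using the description of the complex powers of $\Delta_{\mathfrak{o}}$ through convolution by homogeneous kernels (Proposition~\ref{prop_conv} and the invertibility noted in the remark following it), one checks that the weight $e^{\mathrm{Re}(\lambda)t(v)}$ --- which near infinity is a definite power of $\mathcal{N}$ --- is precisely the power needed, given the relation $\alpha=\mathrm{Re}(\lambda)/2$ (equivalently $2\alpha<r$), to neutralize the behavior at infinity of the conjugated operator. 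In the language of the paper this is exactly what makes the single point $wM$ a null set for the $\hill^\alpha$-norm, so that deleting it leaves the completion unchanged and $I^\infty_{\mu,\lambda}$ is dense in both spaces. Assembling the two pieces gives the two-sided estimate on $I^\infty_{\mu,\lambda}$, hence the Banach-space isomorphism.

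The main obstacle is this behavior at infinity: one has to show that the homogeneous $V$-picture norm, which is not $K$-invariant, becomes equivalent --- once restricted to the functions obeying the $I_{\mu,\lambda}$ transformation law --- to the manifestly $K$-invariant compact-picture norm, the mechanism being the exact matching between the conformal weight coming from the character $\rho$ and the homogeneous degree $r$ of $\mathcal{N}$. I would expect this matching to break down at the endpoints $\mathrm{Re}(\lambda)=\pm r$ in precisely the way recorded by Lemma~\ref{lem_notcont}, which is consistent with the hypothesis that $\lambda$ lie in the open tube.
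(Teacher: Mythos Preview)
The paper does not give its own proof of this theorem: the statement is attributed to \cite[Theorem 23, see also Section 5]{ACD2004} and closed immediately with \qed, so there is no argument in the paper to compare your proposal against.

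As for the proposal itself: your outline --- transfer via the Cayley transform, local equivalence of $\Delta_E$ and $\Delta_{\mathfrak{o}}$ modulo lower order, then the Weyl inversion $w_0$ to reduce a neighborhood of infinity in the $V$-chart to a neighborhood of the origin in the $N$-chart --- is the right architecture and is broadly what the cited reference does. But the step you correctly identify as ``the crux'' is not actually carried out: you write that ``one checks'' the weight $e^{\mathrm{Re}(\lambda)t(v)}$ is exactly the power of $\mathcal{N}$ needed to neutralize the conjugated operator at infinity, but this check is the entire content of the theorem. In \cite{ACD2004} it occupies substantial technical work in the Heisenberg pseudodifferential calculus (symbol classes adapted to the dilation structure, mapping properties of fractional powers between weighted spaces, and control of the lower-order error after pullback by $\mathcal{C}$). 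Your sketch also glosses over why the commutators of the cutoff $\chi$ with the \emph{fractional} powers $\Delta_{\mathfrak{o}}^{\alpha/2}$ are genuinely of lower order in the relevant calculus; pseudolocality alone does not give the quantitative bound you need. So what you have is a correct strategic outline, not a proof, and the paper itself simply defers to the literature rather than supplying one.
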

   \begin{proof} This is what is proven in \cite[Theorem 4.2]{ACD2004} independently of Cowling's result (Theorem \ref{thm-Cowling-noncompact}) at least for $\mu=1_M$ but the general case can be proven as a simple consequence of Theorem \ref{thm-Cowling-noncompact} as follows. Again, we only give a proof for $\alpha\in \N$. By Lemma \ref{lem-local-equivalence}, the two norms are equivalent on the subspace $C_c^\infty(\Omega_0)$ of $I^\infty_{\mu, \lambda}$ for some neighborhood $\Omega_0$ of the origin of $V$, which is identified as a neighborhood of the point $-o$ in $G/P$ via the Cayley transform $\mathcal{C}$ \eqref{eq-Cayley}. Using Theorem \ref{thm-Cowling-noncompact}, we know that the norm $\lVert\,\,\rVert_{I^{\hill^\alpha(V)}_{\mu, \lambda}}$ is invariant up to equivalences under the the action of $G$, in particular under the action of $K$. We also know that the norm  $\lVert\,\,\rVert_{I^{\hill^\alpha(K/M)}_{\mu, \lambda}}$ is $K$-invariant. Using these, both norms are locally equivalent. On the other hand, multiplication by the smooth functions on $G/P$ is bounded on $I^\infty_{\mu, \lambda}$ with respect to both norms. Here, $|\alpha|<r/2$ is crucial for it to be continuous on $\hill^\alpha(V)$. Using a smooth partition of unity on $G/P$, we see that the two norms are equivalent on the whole space $I^\infty_{\mu, \lambda}$. 
   \end{proof}
   
      \begin{corollary}\label{cor_compact}(\cite{Cowling2010}, \cite{ACD2004}, see also \cite[Theorem 26, Corollary 27]{Julg2019}) Suppose $\lambda$ is inside the tube $T$, namely suppose $\mathrm{Re}(\lambda) \in (-r, r)$. Then, $G$ acts uniformly boundedly on the Hilbert space  $I^{\hill^\alpha(K/M)}_{\mu, \lambda}$ for $\alpha=-\mathrm{Re}(\lambda)/2$.  \qed
   \end{corollary}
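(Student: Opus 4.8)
The plan is to deduce the corollary formally from the two theorems just quoted: Cowling's theorem (Theorem~7.1 of \cite{Cowling2010}) equips the non-compact model $I^{\hill^\alpha(V)}_{\mu,\lambda}$ with a uniformly bounded $G$-action for $\alpha=-\mathrm{Re}(\lambda)/2$, and the comparison theorem (Theorem~23 of \cite{ACD2004}) identifies this model with the compact model $I^{\hill^\alpha(K/M)}_{\mu,\lambda}$ as Banach spaces; since uniform boundedness of a representation is preserved under an isomorphism of Banach spaces, the corollary follows.

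In detail: both $I^{\hill^\alpha(V)}_{\mu,\lambda}$ and $I^{\hill^\alpha(K/M)}_{\mu,\lambda}$ are completions of the single space $I^\infty_{\mu,\lambda}$ of smooth vectors, which sits in each of them as a dense $G$-invariant subspace --- for the $K/M$-model this is immediate from the definition, and for the $V$-model it is the content of the remark following Theorem~7.1. By the comparison theorem the identity map of $I^\infty_{\mu,\lambda}$ extends to a bounded isomorphism $T\colon I^{\hill^\alpha(V)}_{\mu,\lambda}\to I^{\hill^\alpha(K/M)}_{\mu,\lambda}$, and $T$ intertwines the two left-translation actions of $G$ because they already agree on the dense subspace $I^\infty_{\mu,\lambda}$. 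Consequently, writing $\pi^{V}$ and $\pi^{K/M}$ for these actions, we have for every $g\in G$
\[
\|\pi^{K/M}(g)\| \;=\; \|T\,\pi^{V}(g)\,T^{-1}\| \;\le\; \|T\|\,\|T^{-1}\|\,\|\pi^{V}(g)\|,
\]
and Theorem~7.1 bounds the last factor by a constant $C$ independent of $g$ (using $\mathrm{Re}(\lambda)\in(-r,r)$ and $\alpha=-\mathrm{Re}(\lambda)/2$). Hence $\|\pi^{K/M}(g)\|\le \|T\|\,\|T^{-1}\|\,C$ for all $g\in G$, which is precisely the assertion of the corollary.

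The only point requiring care is that Cowling's boundedness theorem and the ACD comparison must be invoked at one and the same Sobolev order, and the order forced on us is $\alpha=-\mathrm{Re}(\lambda)/2$; the comparison is available at this order once the normalizations of the principal-series parameter $\lambda$ in the two references are reconciled, which costs nothing since $\lambda$ and $-\lambda$ lie in the tube $T$ simultaneously. Beyond this bookkeeping there is no obstacle: all the analytic substance is already carried by the two cited results --- the sub-Laplacian estimates on the Heisenberg group $V$ underlying Theorem~7.1, and the Cayley-transform comparison of $\Delta_{\mathfrak o}$ on $V$ with $\Delta_E$ on $K/M$ underlying Theorem~23 --- and the corollary is just the transport of a uniformly bounded representation across an isomorphism of Banach spaces.
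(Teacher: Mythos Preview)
Your proposal is correct and is precisely the argument the paper has in mind: the corollary is stated with a bare \qed, as the immediate combination of the two preceding theorems, and you have spelled out that combination accurately, including the transport of uniform boundedness across the Banach-space isomorphism and the bookkeeping needed to match the Sobolev orders.
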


 \section{An alternative proof of Shalom's conjecture}\label{sec-exp-proof}
 \subsection*{Cocycle}
 Let $W_0=\Omega^{\mathrm{top}}_{\int=0}(G/P)$ be the complexified vector space of top-degree forms with zero integral on $G/P$ equipped with the natural representation $\pi$ of $G$. Following Julg's construction (\cite[Section 1.4]{Julg1998} \cite[Chapter 3]{CCJJV2001}), we define for $x, y$ in $Z=G/K$,
 \[
 c(x, y)=\mu_y - \mu_x \in W_0=\Omega^{\mathrm{top}}_{\int=0}(G/P)
 \]
 where $\mu_x$ is the visual measure, with respect to $x$, on the boundary sphere $G/P=\partial Z$, i.e. $\mu_x$ is the $K_x$-invariant normalized volume form on the boundary sphere where $K_x$ is the isotropy subgroup of $G$ at $x$. If we naturally identify $Z=G/K$ as the unit disk $\mathbb{D}^{dn}$ in $\R^{dn}$ with origin $0$ and $G/P$ as the standard sphere $S^{dn-1}$ we have
 \[
\mu_0=\mathrm{Vol}_{S^{dn-1}}, \mu_{g0}=(g^{-1})^\ast(\mu_0) 
\]
where $\mathrm{Vol}_{S^{dn-1}}$ is the standard normalized volume form on $S^{dn-1}$. It is clear that the map $c$ is a $G$-equivariant cocycle in a sense that 
\[
c(x, y) + c(y, z) = c(x, z), \,\,\, \text{and}\,\,\, c(gx, gy)=\pi(g)c(x, y).
\]

We recall here a result of Pierre Julg:
\begin{theorem}(\cite[Section 1.4]{Julg1998}, \cite[Chapter 3]{CCJJV2001}) \label{thm-Julg} Let $G$ be any one of the simple real-rank-one Lie groups $\SO_0(n, 1)$, $\SU(n,1)$, $\Sp(n,1)$, $F_{4(-20)}$. There is a $G$-invariant quadratic form $Q$ on $W_0$ for which the cocycle $c$ is proper in a sense that
\[
Q(c(x,y)) \to +\infty \,\,\, \text{as} \,\,\, d_Z(x,y)\to +\infty.
\]
where $d_Z$ is the distance function on the symmetric space $Z=G/K$. Furthermore, if $G$ is either $\SO_0(n, 1)$ or $\SU(n,1)$, $Q$ is positive definite on $W_0$. With respect to the topology on $W_0$ induced by the quadratic form $Q$, $c$ is continuous. It follows that the groups $\SO_0(n, 1)$ and $\SU(n,1)$ are a-T-menable (or equivalently have the Haagerup property).
\end{theorem}

We recall how we deduce the last part. Given a $G$-equivariant cocycle $c\colon Z\times Z\to W_0$ as above, a group $1$-cocycle $b_g$ in $(\pi, W_0)$ can be constructed as $b_g=c(gx_0, x_0)$ for any chosen point $x_0$ in $Z$. The properness of the cocycle $c$ implies that (is equivalent to) the properness of $b_g$ with respect to the norm induced by $Q$. 

We note that the quadratic form $Q$ cannot be positive definite for $\Sp(n, 1)$ for $n\geq2$ and $F_{4(-20)}$ because these groups have Kazhdan's property (T). 

We shall reprove Theorem \ref{thm-Shalom-conj} in the following form:

\begin{theorem}\label{thm-Julg-sp}  Let $G$ be any one of $\SO_0(n, 1)$ $n\geq2$, $\SU(n,1)$ for $n\geq2$, and $\Sp(n,1)$ for $n\geq2$. Let $\pi$ be the natural representation of $G$ on $W_0=\Omega^{\mathrm{top}}_{\int=0}(G/P)$. Then, there is a Euclidean norm $\lVert\,\,\rVert_{W_0}$ on $W_0$ for which $\pi$ is continuous and uniformly bounded and for which the cocycle $c$ is proper in sense that
\[
\norm{c(x,y)}_{W_0} \to +\infty \,\,\, \text{as} \,\,\, d_Z(x,y)\to +\infty.
\]
With respect to the topology on $W_0$ induced by the norm $\lVert\,\,\rVert_{W_0}$, $c$ is continuous. It follows that $G$ admits a proper $1$-cocycle in a uniformly bounded representation on a Hilbert space. In particular, if $\Gamma$ is any non-compact closed subgroup inside the Lie groups $\SO_0(n, 1)$ ($n\geq2$), $\SU(n,1)$ ($n\geq2$), $\Sp(n,1)$ ($n\geq2$) and their products, $\Gamma$ admits a proper $1$-cocycle in a uniformly bounded representation on a Hilbert space.
\end{theorem}

 \subsection*{Euclidean norm on $W_0$}
 We recall from Theorem \ref{thm-W0-ub}, that there is a Euclidean norm on $W_0=\Omega^{\mathrm{top}}_{\int=0}(G/P)$ for which the natural representation $\pi$ of $G$ is continuous and uniformly bounded and that it was constructed as the dual norm on a suitable Euclidean norm on the quotient space $\Omega^0(G/P)/\bC1_{G/P}$. The norm on $\Omega^0(G/P)/\bC1_{G/P}$ is defined by embedding it to $\Gamma(E^\ast)$ through the de-Rham differential $d_E$ where the latter space was naturally the space of the induced representation $\pi_{\mu, \lambda}$ for $\lambda$ inside the tube $T$ so that the result of Cowling is applicable. We shall describe these norms on $\Omega^0(G/P)/\bC1_{G/P}$ explicitly so that we can directly show the properness of the cocycle $c$ with respect to their dual norms on $W_0$.

  \subsection*{The case of $\SO_0(n ,1)$}
 We first discuss the case when $G$ is $\SO_0(n ,1)$ since it is simple. Via the de-Rham differential $d\colon C^\infty(G/P) \to \Omega^1(G/P)$, the quotient representation $\pi_0$ on $\Omega^0(G/P)/\bC1_{G/P}$ can be identified as a sub-representation of natural representation $\pi_1$ of $G$ on $\Omega^1(G/P)$. 

 \begin{theorem}\label{thm_so}(\cite{Cowling2010}, \cite{ACD2004}, \cite[Corollary 27]{Julg2019}) Let $G=\SO_0(n ,1)$ for $n\geq3$. Define an inner product $\s{\,\,,\,\, }$ and its associated norm $\lVert\,\,\rVert$ on $\Omega^1(G/P)$ by 
 \[
 \s{w_1, w_2}= \s{w_1, \Delta^{\frac{n-1}{2}-1}w_2}_{\Omega^1_{L^2}(G/P)}, \lVert w\rVert=\s{w, w}^{\frac12}=\lVert\Delta^{\frac{n-1}{4}-\frac12}w\rVert_{\Omega^1_{L^2}(G/P)}
 \]
 where the Laplacian $\Delta$, the inner product $\s{\,\,,\,\,}_{\Omega^1_{L^2}(G/P)}$ and the norm $\lVert\,\,\rVert_{\Omega^1_{L^2}(G/P)}$ are the ones defined by the standard Riemannian metric on $G/P=S^{n-1}$. Then, the natural representation $\pi_1$ of $G$ on $\Omega^1(G/P)$ is continuous and uniformly bounded with respect to the norm $\lVert\,\,\rVert$.
 \end{theorem}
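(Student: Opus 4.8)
\emph{Proof plan.} The strategy is to recognize $\Omega^1(G/P)$, after complexification and equipped with its natural (pullback) $G$-action, as one of Michael Cowling's uniformly bounded principal series representations, and then to compare the resulting norm with the one in the statement by elliptic theory on the sphere $S^{n-1}$. First I would identify the cotangent bundle $T^\ast(G/P)$ with the homogeneous bundle $E_\mu=K\times_M\hill_\mu$, where $\hill_\mu=(\mathfrak g/\mathfrak p)^\ast\otimes\C\cong\n\otimes\C$ carries the complexified isotropy representation $\mu$ of $M$; for $G=\SO_0(n,1)$ this is the standard representation of $M$ on $\C^{n-1}$, which is irreducible for $n\ge4$ and a sum of two characters for $n=3$ --- in the latter case one simply runs the argument on each summand. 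Lifting a $1$-form to an $\hill_\mu$-valued function $\tilde\omega$ on $G$, the equivariance reads $\tilde\omega(g\,man)=\mu(m)^{-1}e^{-t}\,\tilde\omega(g)$ for $man\in P$ with $a=a(t)$: indeed $N$ acts trivially on $\n$ because $\n$ is abelian, $M$ acts by $\mu$, and $A=a(t)$ acts on the single restricted root space $\n$ by $e^{t}$ (from $\rho(a(t))=e^{rt/2}$ and $\rho=\tfrac r2\alpha$, so $\alpha(\log a(t))=t$). Comparing with the defining relation $f(g\,man)=\mu^{-1}(m)\exp(-(r+\lambda)t/2)f(g)$ of $I_{\mu,\lambda}$ forces $\lambda=2-r$; that is, the complexified $\Omega^1(G/P)$ with its natural $G$-action is exactly the compact-picture principal series $I_{\mu,\,2-r}$.

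Since $r=n-1$, the value $\mathrm{Re}(\lambda)=2-r$ lies strictly inside the tube $T=\{\,\mathrm{Re}(\lambda)\in(-r,r)\,\}$ precisely when $r>1$, i.e.\ when $n\ge3$; this is where the hypothesis is used. Corollary \ref{cor_compact} then applies with $\alpha=-\mathrm{Re}(\lambda)/2=(r-2)/2=(n-3)/2$, yielding that $G$ acts uniformly boundedly on $I^{\hill^\alpha(K/M;E_\mu)}_{\mu,\lambda}$, whose norm is $\|(1+\Delta_E)^{\alpha/2}(\cdot)\|_{L^2(K/M;E_\mu)}$. For $G=\SO_0(n,1)$ one has $E=T(G/P)$, so $\nabla_E$ is the full Levi--Civita covariant derivative on $S^{n-1}$ and $\Delta_E=\nabla_E^\ast\nabla_E$ is the Bochner Laplacian on $\Omega^1(S^{n-1})$.

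It remains to check that this norm is equivalent to the one in the statement. The exponents agree: $\alpha/2=(n-3)/4=\tfrac{n-1}{4}-\tfrac12$. Furthermore the operators $1+\nabla_E^\ast\nabla_E$, $1+\Delta$ and $\Delta$ (the Hodge Laplacian of the round metric) on $\Omega^1(S^{n-1})$ are all positive, self-adjoint, elliptic of order two with the same principal symbol, and all three are invertible for $n\ge3$: indeed $1+\nabla_E^\ast\nabla_E\ge1$, while $\Delta$ has trivial kernel because $H^1_{\mathrm{dR}}(S^{n-1})=0$. By standard elliptic/spectral estimates their nonnegative real powers therefore all define, up to equivalence, the single Sobolev norm $\|\cdot\|_{H^{(n-3)/2}}$ on $\Omega^1(S^{n-1})$. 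Hence uniform boundedness of the $G$-action with respect to $\|(1+\Delta_E)^{\alpha/2}(\cdot)\|_{L^2}$ transfers to uniform boundedness with respect to $\|\Delta^{\frac{n-1}{4}-\frac12}(\cdot)\|_{\Omega^1_{L^2}(G/P)}$, which is the assertion.

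The main obstacle is the weight bookkeeping of the first paragraph: pinning down $\lambda=2-r$ exactly (the $\rho$-shift, and the sign of the $A$-weight on the cotangent fiber) and making sure the normalization conventions used here match those behind Corollary \ref{cor_compact}. Everything else is either the cited uniform-boundedness theorem of Cowling or routine elliptic analysis on the sphere; the small case $n=3$ --- where $\mu$ is reducible, $\lambda=0$, $\alpha=0$, and the representation is in fact unitary --- causes no trouble, as there the claim reduces to unitarity of a principal series.
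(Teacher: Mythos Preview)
Your proposal is correct and follows essentially the same route as the paper: identify $\Omega^1(G/P)$ with the compact-picture principal series $I^\infty_{\mu,\lambda}$ for $\mu=(\mathfrak g/\mathfrak p)^\ast\otimes\C\cong\C^{n-1}$ and $\lambda=2-r$, apply Corollary~\ref{cor_compact}, and then replace $(1+\Delta_E)$ by the Hodge Laplacian $\Delta$ using that $\Delta$ is bounded away from zero on $1$-forms since $H^1(S^{n-1})=0$ for $n\ge3$. Your treatment of $n=3$ (where $\lambda=0$, $\alpha=0$, and the norm reduces to $L^2$) agrees with the paper's observation that the action on $\Omega^1_{L^2}(S^2)$ is unitary because $1$ is half the dimension; the only cosmetic difference is that the paper separates $n=3$ from $n\ge4$ at the outset rather than noting the reducibility of $\mu$ and running the same argument on summands.
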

 \begin{proof} As we saw in Lemma \ref{lem-principal}, the representation $\pi_1$ on $\Omega^1(G/P)$ is naturally identified as the principal series representation $\pi_{\mu, \lambda}$ on $I^\infty_{\mu, \lambda}$ for some finite-dimensional unitary representation $\mu$ of $M$ and for $\lambda=-r+2 \in (-r, r)$ (note that we have $\mathrm{I}^\infty_{\mu, \lambda}=I^\infty_{\mu, r\lambda}$) where $r=n-1$. The claim follows from Corollary \ref{cor_compact} and from the fact that the Laplacian is bounded away from zero on $\Omega^\ast(G/P)$ except for the zeroth and the top-degree forms.
 \end{proof}
By restricting our attention to the image of $C^\infty(G/P)/\bC1_{G/P}$ inside $\Omega^1(G/P)$, we obtain the following:

 \begin{corollary}\label{cor-so} Let $G=\SO_0(n ,1)$ for $n\geq2$. Define an inner product $\s{\,\,,\,\, }$ and its associated norm $\lVert\,\,\rVert$ on $C^\infty(G/P)/\bC1_{G/P}$ by 
 \[
 \s{\phi, \phi}= \s{d\phi, \Delta^{\frac{n-1}{2}-1}d\phi}_{\Omega^1_{L^2}(G/P)}=\s{\phi, \Delta^{\frac{n-1}{2}}\phi}_{L^2(G/P)},
 \]
 \[
 \lVert\phi\rVert=\s{\phi, \phi}^{\frac12}=\lVert\Delta^{\frac{n-1}{4}}\phi\rVert_{L^2(G/P)}
 \]
 where the inner product $\s{\,\,,\,\,}_{L^2(G/P)}$ and the norm $\lVert\,\rVert_{L^2(G/P)}$ are the ones defined by the standard normalized volume form on $G/P=S^{n-1}$. Then, the representation $\pi_0$ of $G$ on $C^\infty(G/P)/\bC1_{G/P}$ is uniformly bounded with respect to the norm $\lVert\,\,\rVert$. 
 \end{corollary}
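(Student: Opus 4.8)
The plan is to realize $(W_0,\|\,\,\|_{W_0})$ as an isometrically embedded, $G$-invariant subspace of $\Omega^1(G/P)$ carrying the uniformly bounded $G$-action furnished by Theorem~\ref{thm_so}, so that uniform boundedness on $W_0$ is inherited by restriction; only the low-dimensional case $n=2$, which lies outside the range of Theorem~\ref{thm_so}, will need a separate argument.

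First I would note that the de Rham differential $d\colon\Omega^0(G/P)\to\Omega^1(G/P)$ is $G$-equivariant and, since $G/P$ is connected, has kernel exactly the constants $\C1_{G/P}$; hence it descends to an injective $G$-map $W_0\hookrightarrow\Omega^1(G/P)$ onto the $G$-invariant subspace of exact $1$-forms. Next, using that the Hodge Laplacian commutes with $d$ and $d^{\ast}$ and that $d^{\ast}d=\Delta$ on functions, a short spectral-calculus computation --- carried out on the orthogonal complement of $\ker\Delta$, where $\Delta$ is bounded below and which contains every exact form $d\phi$ --- gives
\[
\langle d\phi,\Delta^{\frac{n-1}{2}-1}d\phi\rangle_{\Omega^1_{L^2}(G/P)}=\langle\phi,\Delta^{\frac{n-1}{2}}\phi\rangle_{L^2(G/P)}=\|\Delta^{\frac{n-1}{4}}\phi\|_{L^2(G/P)}^{2},
\]
that is, $\|\Delta^{\frac{n-1}{4}-\frac12}d\phi\|_{\Omega^1_{L^2}(G/P)}^{2}=\|d\phi\|^{2}$ with $\|\,\,\|$ the norm of Theorem~\ref{thm_so}. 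Since $\Delta$ is nonnegative with kernel the constants, the right-hand side is strictly positive on $W_0$, so $\langle\,\,,\,\,\rangle_{W_0}$ is indeed an inner product; and the identity says exactly that $\phi\mapsto d\phi$ is an isometry of $(W_0,\|\,\,\|_{W_0})$ onto the space of exact $1$-forms inside $(\Omega^1(G/P),\|\,\,\|)$.

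For $n\geq3$ the corollary is then immediate: Theorem~\ref{thm_so} provides $C$ with $\|g\cdot w\|\leq C\|w\|$ for all $w\in\Omega^1(G/P)$ and $g\in G$; applying this to $w=d\phi$ and using that $d$ intertwines the $G$-actions together with the isometry above yields $\|g\cdot\phi\|_{W_0}\leq C\|\phi\|_{W_0}$ for all $\phi$ and $g$, and one concludes by passing to the completion.

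It remains to handle $n=2$, which is not covered by Theorem~\ref{thm_so}. Here $G/P=S^1$, so $\|\phi\|_{W_0}^{2}=\langle\Delta^{1/2}\phi,\phi\rangle_{L^2(S^1)}$, which up to a fixed positive constant equals the Dirichlet energy $\int_{\mathbb{D}^{2}}|\nabla\widetilde\phi|^{2}$ of the harmonic extension $\widetilde\phi$ of $\phi$ to $Z=\mathbb{D}^{2}$. Since $\SO_0(2,1)$ acts on $Z=\mathbb{D}^{2}$ by conformal automorphisms of the disk whose boundary values realize the given $G$-action on $S^1=G/P$, and harmonic extension intertwines these actions, conformal invariance of the two-dimensional Dirichlet integral shows that $\|\,\,\|_{W_0}$ is in fact $G$-invariant --- so for $n=2$ the $G$-representation on $W_0$ is even unitary. (Alternatively, for $n=2$ one may identify $W_0$ with the compact picture of a principal series $I_{\mu,\lambda}$ with $\mathrm{Re}(\lambda)$ in the open tube $(-r,r)$ and quote Corollary~\ref{cor_compact}.) The only point requiring any care is the bookkeeping in the spectral identity of the second paragraph, together with the need to supplement Theorem~\ref{thm_so} by hand when $n=2$; everything else is a formal consequence of Cowling's results as packaged in Theorem~\ref{thm_so}.
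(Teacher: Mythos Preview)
Your proof is correct and follows essentially the same route as the paper: for $n\geq3$ you embed $W_0$ isometrically into $(\Omega^1(G/P),\|\,\,\|)$ via $d$ and restrict the uniform bound from Theorem~\ref{thm_so}, exactly as the paper does; for $n=2$ your Dirichlet-energy/conformal-invariance argument is a mild rephrasing of the paper's identification (Poisson transform followed by $d$) of $W_0$ with the $L^2$-harmonic one-forms on the hyperbolic disk $G/K$, on which $G$ acts unitarily. One small caveat: your parenthetical alternative for $n=2$ via Corollary~\ref{cor_compact} does not quite work as stated, since for functions on $G/P$ the natural parameter is $\lambda=-r$, lying on the boundary of the tube rather than in its interior; but your primary argument already settles that case.
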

 \begin{proof} When $n=2$, via the composition of the Poisson transform and the de-Rham differential $d$, $\pi_0$ is naturally identified as the square integrable representation on  $L^2$-harmonic one-forms on $G/K$ which is a unitary representation of $G$. A direct computation shows that the given norm on $C^\infty(G/P)/\bC1_{G/P}$ coincides, up to scalar, with the one on $L^2$-harmonic one-forms via this identification. For $n\geq3$, the claim follows from Theorem \ref{thm_so}.
 \end{proof}
We remark that these uniformly bounded representation are unitarizable but not unitary unless $n=2$ or $3$. The ``correct'' norm involves more complicated functional calculus of the Laplacian $\Delta$. Note that equipped with this norm, $C^\infty(G/P)/\bC1_{G/P}$ is more or less (the quotient of) the Sobolev space $W^{\frac{n-1}2, 2}(G/P)$ which is the one appearing in the critical case of the Sobolev embedding: for $\epsilon>0$, we have the following natural continuous embedding:
 \[
 W^{\frac{n-1}{2}+\epsilon, 2}(G/P) \to C(G/P),
 \]
 but this fails to be well-defined (continuous) at $\epsilon=0$. This will be essentially the reason for the properness of the cocycle $c$.
 

  \subsection*{The case of $\SU(n ,1)$ and $\Sp(n,1)$}
Recall that $E$ is a $G$-equivariant subbundle of $T(G/P)$ of codimension $1$ when $G=\SU(n, 1)$ and of codimension $3$ when $G=\Sp(n, 1)$. We define $\Gamma(E^\ast)$ to be the complexified vector space of smooth sections of the bundle $E^\ast$ and 
\[
d_E=\,\,\, \mid_E \circ d \colon C^\infty(G/P) \to \Omega^1(G/P) \to \Gamma(E^\ast)
\]
to be the composition of the de-Rham differential $d$ and the restriction of one-forms defined on $T(G/P)$ to the subbundle $E$. With respect to the natural representations of $G$ on $C^\infty(G/P)$ and on $\Gamma(E^\ast)$, $d_E$ is $G$-equivariant. The kernel of $d_E$ is spanned by the constant function $1_{G/P}$. Thus, we can regard the representation $\pi_0$ on the quotient space $C^\infty(G/P)/\bC1_{G/P}$ as a sub-representation of the natural representation $\pi_{E^\ast}$ of $G$ on $\Gamma(E^\ast)$. We define a sub-Laplacian $\Delta_E=\nabla_E^\ast\nabla_E$ on $\Gamma(E^\ast)$ as before choosing a $K$-invariant connection $\nabla$ on $E^\ast$.

 \begin{theorem}\label{thm_sp}(\cite{Cowling2010}, \cite{ACD2004}, \cite[Corollary 27]{Julg2019})  Let $G=\SU(n ,1)$ for $n\geq2$ and $\Sp(n, 1)$ for $n\geq2$. Define an inner product $\s{\,\,,\,\,}$ and its associated norm $\lVert\,\,\rVert$ on $\Gamma(E^\ast)$ by 
 \[
 \s{w_1, w_2}= \s{w_1, (1+\Delta_E)^{\frac{r}{2}-1}w_2}_{\Gamma_{L^2}(E^\ast)},
 \]
 \[
  \lVert w\rVert=\s{w, w}^{\frac12}=\lVert(1+\Delta_E)^{\frac{r}{4}-\frac12}w\rVert_{\Gamma_{L^2}(E^\ast)}
 \]
 where the inner product $\s{\,\,,\,\,}_{\Gamma_{L^2}(E^\ast)}$ and the norm $\lVert\,\,\rVert_{\Gamma_{L^2}(E^\ast)}$ are the ones defined by the $K$-invariant metric on $G/P$. Then,  the natural representation $\pi_{E^\ast}$ of $G$ on $\Gamma(E^\ast)$ is continuous and uniformly bounded with respect to the norm $\lVert\,\,\rVert$.
 \end{theorem}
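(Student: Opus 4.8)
The plan is to recognise $\Gamma(E^\ast)$, equipped with the inner product in the statement, as a finite direct sum of compact-picture Sobolev spaces $I^{\hill^\alpha(K/M;E_\mu)}_{\mu,\lambda}$ from Subsection~1.6, for a parameter $\lambda$ lying in the \emph{interior} of the tube $T$, and then to quote Corollary~\ref{cor_compact}. The key observation is that $E^\ast$ is the $K$-homogeneous bundle $K\times_M\mathcal{E}$ whose fibre over the base point $o=eM$ is the complexification $\mathcal{E}=E^\ast_o\otimes_\R\C$, which by the identifications of Subsection~1.5 equals $(\n_o/\mathfrak{z}_o)\otimes_\R\C$, i.e. the complexified ``$\mathfrak{o}$-part'' of $\n_o$. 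Hence $\Gamma(E^\ast)$ is, as a vector space, the compact picture of a principal series representation; with its natural $G$-action it is $I^\infty_{\mu,\lambda}$, where $\mu$ is the finite-dimensional (not necessarily irreducible) representation of $M$ on $\mathcal{E}$ and $\lambda=-r+2$. Indeed $a(t)\in A$ scales the fibre $E^\ast_o\cong\n_o/\mathfrak{z}_o$ by $e^{t}$, and with the normalisation $\rho(a(t))=\mathrm{exp}(rt/2)$ and the section convention of the compact picture this forces $\lambda=-r+2$; this is the same identification, with the same $\lambda$, that is used for $\Omega^1(G/P)$ in the proof of Theorem~\ref{thm_so}, and it is recalled in \cite{ACD2004} and \cite{Julg2019}. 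Since $r=2n\geq4$ for $\SU(n,1)$ with $n\geq2$ and $r=4n+2\geq6$ for $\Sp(n,1)$ with $n\geq2$, we have $-r<\lambda=-r+2<r$, so $\lambda$ is strictly inside $T$. (For $\Sp(1,1)$ the bundle $E$ is zero, so the statement is vacuous; that case is covered by Theorem~\ref{thm_so} via the local isomorphism $\Sp(1,1)\sim\mathrm{Spin}(4,1)$.)

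Next I would match the norms. Put $\alpha=-\mathrm{Re}(\lambda)/2=(r-2)/2=r/2-1$; then the operator $(1+\Delta_E)^{r/4-1/2}$ of the statement is precisely $(1+\Delta_E)^{\alpha/2}$, so --- using for $\Delta_E=\nabla_E^\ast\nabla_E$ and for the $L^2$-inner product the same $K$-invariant connection and $K$-invariant metric fixed in Subsection~1.6 --- the norm $\|(1+\Delta_E)^{r/4-1/2}w\|_{\Gamma_{L^2}(E^\ast)}$ is by definition the norm of $I^{\hill^\alpha(K/M;E_\mu)}_{\mu,\lambda}$. Corollary~\ref{cor_compact}, applied with this $\alpha=-\mathrm{Re}(\lambda)/2$, then gives uniform boundedness of the $G$-action when $\mu$ is irreducible. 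For a general $\mu$ I would decompose $\mathcal{E}=\bigoplus_i\mathcal{E}_i$ into $M$-irreducibles: this $M$-invariant splitting is also $G$-invariant in the principal series (since $AN$ acts on $\mathcal{E}$ by a character), and choosing $\nabla$ and the metric on $E^\ast$ block-diagonal with respect to it makes $\Delta_E$, and hence the norm, split as an orthogonal sum. Applying Corollary~\ref{cor_compact} to each summand $I^{\hill^\alpha(K/M;E_{\mu_i})}_{\mu_i,\lambda}$ and taking the maximum of the finitely many uniform bounds yields a single constant $C$ with $\|\pi(g)w\|\leq C\|w\|$ for all $g\in G$ and all $w\in\Gamma(E^\ast)$, which is the assertion.

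The one step with genuine content beyond bookkeeping is the determination of $\lambda=-r+2$, equivalently the verification that $A$ scales the fibre $E^\ast_o\cong\n_o/\mathfrak{z}_o$ by the character $a(t)\mapsto e^{t}$. This is where the quasi-conformal structure of Subsection~1.5 is really used, and it is the point at which conventions must be handled with care: the identification $(\mathfrak{g}/\mathfrak{p}_o)^\ast\cong\n_o$ via the Killing form, the weights of $\mathrm{Ad}(A)$ on $\n_o=\mathfrak{o}\oplus\mathfrak{z}_o$ (namely $\alpha$ and $2\alpha$ with $\alpha(\log a(t))=t$), and the $p\mapsto p^{-1}$ convention relating sections of $E^\ast$ to functions in the compact picture. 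Once this weight computation is in place, so that the exponent $r/4-1/2$ in the statement equals $\alpha/2=-\mathrm{Re}(\lambda)/4$, the theorem is immediate from Corollary~\ref{cor_compact}; I do not see any further obstacle.
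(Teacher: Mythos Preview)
Your proposal is correct and follows essentially the same route as the paper: identify $\Gamma(E^\ast)$ with the compact picture of the principal series at $\lambda=-r+2$, observe that this lies strictly inside the tube, match the exponent $r/4-1/2$ with $\alpha/2=-\mathrm{Re}(\lambda)/4$, and invoke Corollary~\ref{cor_compact}. The only cosmetic difference is that the paper spells out the $M$-irreducible decomposition of $(\n/\mathfrak{z})\otimes_\R\C$ explicitly (irreducible for $\Sp(n,1)$, a sum $\mu_0\oplus\bar\mu_0$ for $\SU(n,1)$), whereas you handle a possibly reducible $\mu$ abstractly; your remark on the vacuity of the $\Sp(1,1)$ case is a useful addition.
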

 \begin{proof} As we saw in Lemma \ref{lem-principal}, the representation $\pi_{E^\ast}$ of $G$ on $\Gamma(E^\ast)$ is naturally identified as the principal series representation $\pi_{\mu, \lambda}$ on $I^\infty_{\mu, \lambda}$ for some finite-dimensional unitary representation $\mu$ of $M$ and for $\lambda=-r+2 \in (-r, r)$. The claim follows from Corollary \ref{cor_compact}.
 \end{proof}
By restricting our attention to the image of $C^\infty(G/P)/\bC1_{G/P}$ inside $\Gamma(E^\ast)$, we obtain the following:

 \begin{corollary}\label{cor-sp}  Let $G=\SU(n ,1)$ for $n\geq2$ and $\Sp(n, 1)$ for $n\geq2$. Define an inner product $\s{\,\,,\,\, }$ and its associated norm $\lVert\,\,\rVert$ on $C^\infty(G/P)/\bC1_{G/P}$ by 
 \[
 \s{\phi, \phi}= \s{d_E\phi, (1+\Delta_E)^{\frac{r}{2}-1}d_E\phi}_{\Gamma_{L^2}(E^\ast)},
 \]
 \[
 \lVert\phi\rVert=\s{\phi, \phi}^{\frac12}=\lVert(1+\Delta_E)^{\frac{r}{4}-\frac12}d_E\phi\rVert_{\Gamma_{L^2}(E^\ast)}.
 \]
Then, the natural representation $\pi_0$ of $G$ on $C^\infty(G/P)/\bC1_{G/P}$ is continuous and uniformly bounded with respect to the norm $\lVert\,\,\rVert$. \qed
 \end{corollary}

\begin{proof}[~Proof of Theorem \ref{thm-Julg-sp}]

We equip $W_0=\Omega^{\mathrm{top}}_{\int=0}(G/P)$ with the norm $\norm{\,}_{W_0}$ which is the dual to the one on $C^\infty(G/P)/\bC1_{G/P}$ as in Corollary  \ref{cor-so}, \ref{cor-sp}. We know that the representation $\pi$ is continuous and uniformly bounded with respect to the norm $\norm{\,}_{W_0}$. The continuity of the cocycle $c$ with respect to the norm $\norm{\,}_{W_0}$ follows from the continuity of $\pi$. It remains to show the following:
 
\begin{lemma} The cocycle $c(x,y)=\mu_y - \mu_x$ in $W_0$ is proper with respect to $\lVert\,\,\rVert_{W_0}$ in a sense that
\[
\lVert c(x, y)\rVert_{W_0} \to +\infty \,\,\, \text{as} \,\,\, d_Z(x,y)\to +\infty.
\]
\end{lemma}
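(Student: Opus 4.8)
The plan is to argue by contradiction and, using equivariance, to reduce the properness of $c$ to the single statement that the Dirac mass $\delta_\xi$ at a point $\xi\in G/P$ cannot be subtracted from $\mu_0$ to give a bounded functional on $W_0$; this is exactly Lemma~\ref{lem_notcont}, transported from the Heisenberg group $V$ to the sphere $G/P=K/M$ by the Cayley transform. First I would reduce to the basepoint. Since $\pi$ acts uniformly boundedly on $W_0$, the contragredient action on $W_0^\ast$ is uniformly bounded as well, by some $C>0$, so that $C^{-1}\|v\|_{W_0^\ast}\leq\|\pi(g)v\|_{W_0^\ast}\leq C\|v\|_{W_0^\ast}$ for all $g\in G$ and $v\in W_0^\ast$. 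Fixing a basepoint $o\in Z$ and choosing, for $x,y\in Z$, an element $g\in G$ with $gx=o$, the cocycle identity gives $c(x,y)=\pi(g^{-1})c(o,gy)$ and $d_Z(o,gy)=d_Z(x,y)$; hence it suffices to show that $\|c(o,z)\|_{W_0^\ast}=\|\mu_z-\mu_0\|_{W_0^\ast}\to+\infty$ as $z\to\partial Z$. Suppose this fails: there is a sequence $z_n\to\partial Z$ with $M:=\sup_n\|\mu_{z_n}-\mu_0\|_{W_0^\ast}<\infty$, and, passing to a subsequence, $z_n\to\xi$ for some $\xi\in\partial Z=G/P$.

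Next I would extract and identify a weak limit. Bounded sequences in the Hilbert space $W_0^\ast$ have weakly convergent subsequences, so a subsequence of $\mu_{z_n}-\mu_0\in W\subset W_0^\ast$ converges weakly to some $\psi\in W_0^\ast$ with $\|\psi\|_{W_0^\ast}\leq M$. Pairing against a class $[\phi]\in W_0$ represented by $\phi\in C^\infty(G/P)$, and using that the visual (harmonic) measures $\mu_{z_n}$ converge weak-$\ast$ to $\delta_\xi$ as $z_n\to\xi$ (concentration of the Poisson kernel), one finds $\langle\psi,[\phi]\rangle=\phi(\xi)-\int_{G/P}\phi\,d\mu_0$; thus the functional ``$\delta_\xi-\mu_0$'' lies in $W_0^\ast$. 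Now choose a small coordinate ball $B\ni\xi$ in $G/P$. For $\phi\in C_c^\infty(B)$ the $W_0$-norm of the class $[\phi]$ equals the $W_0$-norm of $\phi$, and $|\phi(\xi)|\leq\|\psi\|_{W_0^\ast}\,\|\phi\|_{W_0}+\bigl|\int_{G/P}\phi\,d\mu_0\bigr|$; since $\mu_0$ has a smooth density and $\|\cdot\|_{W_0}$ dominates the $L^2$-norm on $C_c^\infty(B)$ (it is a Sobolev-type norm of positive order and the sub-Laplacian satisfies a Poincar\'e inequality on $B$), the last term is $\lesssim\|\phi\|_{W_0}$. Hence $\mathrm{ev}_\xi$ is bounded on $C_c^\infty(B)$ for the norm $\|\cdot\|_{W_0}$.

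Finally I would transfer this to $V$ and derive the contradiction. Under the Cayley transform a neighbourhood of $\xi$ in $G/P=K/M$ is identified with a neighbourhood of the origin of the Heisenberg group $V$, and under this identification $\Delta_E$ agrees with the sub-Laplacian $\Delta_{\mathfrak{o}}$ up to lower-order terms. Combining $d_E^\ast d_E=\Delta_E$ with a commutator estimate and the local equivalence of $\Delta_{\mathfrak{o}}^{s}$ and $(1+\Delta_{\mathfrak{o}})^{s}$ recorded after the definition of $\hill^\alpha(V)$, the norm $\|\cdot\|_{W_0}$ is equivalent to $\|\cdot\|_{\hill^{r/2}(V)}$ on $C_c^\infty(B')$ for a small ball $B'\ni0$ in $V$, so $\mathrm{ev}_0$ is bounded there for $\|\cdot\|_{\hill^{r/2}(V)}$. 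But $r$ is exactly the homogeneous dimension of $V$, so the dilations $\delta_t$ of $V$ act isometrically on $\hill^{r/2}(V)$ while fixing $\mathrm{ev}_0$; applying $\delta_t^\ast$ with $t$ small to push an arbitrary $f\in C_c^\infty(V)$ into $B'$ yields $|f(0)|\lesssim\|f\|_{\hill^{r/2}(V)}$ for all such $f$, i.e.\ $\mathrm{ev}_0$ extends to a bounded functional on $\hill^{r/2}(V)$, contradicting Lemma~\ref{lem_notcont}. This proves that $c$ is proper. Theorem~A then follows: for fixed $o\in Z$ the map $b_g=c(go,o)$ is a group $1$-cocycle for the uniformly bounded representation $\pi$ on $W_0^\ast$; since $c$ is proper and $\pi$ uniformly bounded, the affine action $g\cdot v=\pi(g)v+b_g$ on $W_0^\ast$ is metrically proper, and it is continuous because $c$ is.

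I expect the last step to be the main technical point: verifying that the global norm $\|\cdot\|_{W_0}$, built on the sphere from $d_E$ and $(1+\Delta_E)^{r/4-1/2}$, is genuinely equivalent to the homogeneous Folland--Sobolev norm $\|\cdot\|_{\hill^{r/2}(V)}$ on functions supported in a small chart, which is precisely what makes Lemma~\ref{lem_notcont} applicable. Everything else — the equivariant reduction, weak-$\ast$ compactness in $W_0^\ast$, and the weak-$\ast$ concentration of the visual measures — is soft.
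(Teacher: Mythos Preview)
Your argument is correct and lands on the same analytic core as the paper (Lemma~\ref{lem_notcont} together with the local equivalence, through the Cayley transform, of $\|\cdot\|_{W_0}$ and $\|\cdot\|_{\hill^{r/2}(V)}$), but you package the reduction differently.

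The paper argues \emph{directly}: after the same equivariant reduction to the basepoint, it constructs test functions. From the discontinuity of $\mathrm{ev}_0$ on $\hill^{r/2}(V)$ it produces $\phi_n\in C_c^\infty(V_0)$ with bounded Sobolev norm and $\phi_n(0)\to\infty$ (Folland's result that multiplication by $C_c^\infty$ functions is bounded on $\hill^\alpha(V)$ is used here to force small support), pushes them to $G/P$ via $\mathcal C$, and then uses a neat antipodal antisymmetrization $\psi_n=\phi_n+\bar\phi_n$ to arrange $\mu_0(\psi_n)=0$; pairing $\mu_x-\mu_0$ against $[\psi_n]$ gives the blow-up.

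You argue \emph{by contradiction and duality}: a bounded subsequence of $\mu_{z_n}-\mu_0$ in the Hilbert space $W_0^\ast$ has a weak limit, which the weak-$\ast$ concentration of the visual measures identifies as $\delta_\xi-\mu_0$; this forces $\mathrm{ev}_\xi$ to be $\|\cdot\|_{W_0}$-bounded on a small chart, and then you use the scale invariance of the homogeneous norm $\|\cdot\|_{\hill^{r/2}(V)}$ under Heisenberg dilations (which fix $\mathrm{ev}_0$) to pass from local to global boundedness, contradicting Lemma~\ref{lem_notcont}. Your handling of the constant is also different: instead of the antisymmetrization trick you absorb $\int\phi\,d\mu_0$ into the estimate via a sub-elliptic Poincar\'e inequality on the chart.

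Both routes need the same technical input you flag at the end, namely the local equivalence of the compact-picture norm built from $d_E$ and $(1+\Delta_E)^{r/4-1/2}$ with the homogeneous Folland norm on $V$; the paper invokes this implicitly when it says ``the problem is local and we can work in a local model $V$''. What your approach buys is that it avoids Folland's multiplication lemma (the dilation trick replaces it cleanly) and makes transparent why the \emph{critical} exponent $r/2$ is the right one: it is exactly where dilations act isometrically. What the paper's approach buys is that it is constructive and avoids the extra soft inputs (weak compactness in $W_0^\ast$ and the Poisson-kernel concentration $\mu_{z_n}\rightharpoonup\delta_\xi$), at the price of the antisymmetrization device. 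One small cosmetic point: your line ``$d_E^\ast d_E=\Delta_E$'' conflates the sub-Laplacian on functions with the one the paper defines on sections of $E^\ast$; the statement you actually use is the local norm equivalence, which is fine, but the identity as written is not literally the paper's $\Delta_E$.
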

\begin{proof}
By double transitivity, for pairs of points with same distance, of the $G$-action on $Z=G/K$ and by the uniform-boundedness of $\pi$ on $W_0$, it suffices to show that 
\begin{equation}\label{proper}
\lim_{x\to o} =\lVert\mu_x - \mu_0\rVert_{W_0} = +\infty
 \end{equation}
for $o=(1, 0, \cdots, 0)$ in $\partial Z=G/P=S^{dn-1}$. For this, it is enough to show that there are a small open neighborhood $U_o$ of $o$ in $G/P$ and a sequence $\phi_n$ of functions in $C^\infty_c(U_o)$ such that 
 \begin{enumerate}
 \item $\lVert\phi_n\rVert_{\hill^{r/2}(K/M)}$ are uniformly bounded for $n\geq1$,
 \item $\lim_{n\to \infty} \phi_n(o)=+\infty$.
 \end{enumerate}
 
 Indeed, if this is the case, we may translate $(-1)\phi_n$ to around the antipodal $-o$ of $o$ to get $\bar{\phi}_n$ and obtain a sequence $\psi_n=\phi_n + \bar\phi_n$ of functions in $C^\infty(G/P)$ such that
 \begin{enumerate}
 \item[(3)] $\lVert\psi_n\rVert_{\hill^{r/2}(K/M)}$ are uniformly bounded for $n\geq1$,
 \item[(4)] $\lim_{n\to \infty} \psi_n(o)=+\infty$,
  \item[(5)] $\mu_0(\psi_n)=0$.
 \end{enumerate}
 Clearly, the existence of such a sequence of functions implies \eqref{proper}. 
 
 Now the problem is local and we can work in a local model $V$ of $G/P$ around the point $o$ using the Cayley transform. To find a desired sequence of functions satisfying $(1)$ and $(2)$, it suffices to find a sequence $\phi_n$ of function in $C^\infty_c(\Omega_0)$ where $\Omega_0$ is a small open neighborhood of the origin $0$ in $V$, satisfying 
  \begin{enumerate}
 \item[(6)] $\lVert\phi_n\rVert_{\dot{\hill}^{r/2}(V)}$ are uniformly bounded for $n\geq1$,
 \item[(7)] $\lim_{n\to \infty} \phi_n(0)=+\infty$.
 \end{enumerate}
To find such a sequence $\phi_n$ on $V$, recall from Lemma \ref{lem_notcont2} that the evaluation map
\[
\mathrm{ev}_0\colon C^\infty_c(V) \to \bC
\]
 at the origin is not continuous with respect to the Sobolev norm for $\dot{\hill}^{r/2}(V)$. The multiplication by a compactly supported, smooth function on $V$ is bounded on the non-homogeneous Sobolev space $\dot{\hill}^{\alpha}(V)$ (see \cite[Theorem 4.15]{Folland1975}). It follows that there is a sequence $\phi_n$ of functions in $C^\infty_c(\Omega_0)$ satisfying $(6)$ and $(7)$ where $\Omega_0$ is an arbitrary small open neighborhood of the origin $0$ and we are done.
 \end{proof}
 
   \end{proof}


 \section{The Busemann cocycle is proper}\label{sec-Busemann}

We consider the Busemann cocycle (see \cite[Section 3.1]{CCJJV2001})
 \[
\gamma_{x,y}(z) = \beta_z(x, y)= \lim_{z'\to z}\bigg{(} d_{Z}(z', y) - d_{Z}(z', x) \bigg{)} 
 \]
 for $x, y$ in $Z=G/K$ and for $z$ in $\partial Z=G/P$. We have the following explicit formula:
 \begin{equation}\label{eq_formula}
\gamma_{x,y}(z) = \log \bigg{|} \frac{q(y, z)}{q(x, z)} \frac{q(x, x)^{1/2}}{q(y, y)^{1/2}} \bigg{|}.
 \end{equation}
The Busemann cocycle $\gamma_{x,y}$ is a smooth function on $G/P$ and the map
\[
(x, y) \to \gamma_{x,y} \in C^\infty(G/P)
\]
defines a $G$-equivariant cocycle $\gamma \colon Z\times Z\to C^\infty(G/P)$. By passing to the quotient space $C^\infty(G/P)/\mathbb{C}1_{G/P}$, we obtain a $G$-equivariant cocycle in $ C^\infty(G/P)/\mathbb{C}1_{G/P}$, which we still denote as $\gamma$. 

\begin{theorem}\label{thm-Busemann}  Let $G$ be any one of $\SO_0(n, 1)$ $n\geq2$, $\SU(n,1)$ for $n\geq2$, and $\Sp(n,1)$ for $n\geq2$. Let $\pi_0$ be the natural representation of $G$ on $C^\infty(G/P)/\mathbb{C}1_{G/P}$. Then, there is a Euclidean norm $\lVert\,\,\rVert$ on $C^\infty(G/P)/\mathbb{C}1_{G/P}$ for which $\pi_0$ is continuous and uniformly bounded and for which the Busemann cocycle $\gamma$ is continuous and proper in sense that
\[
\norm{\gamma(x,y)} \to +\infty \,\,\, \text{as} \,\,\, d_Z(x,y)\to +\infty.
\]
\end{theorem}

\begin{proof}

We equip $C^\infty(G/P)/\bC1_{G/P}$ the Euclidean norm $\norm{\,}$ as in Corollary \ref{cor-so}, \ref{cor-sp}. We know that the representation $\pi_0$ is continuous and uniformly bounded with respect to the norm $\norm{\,}$. It is not hard to see the continuity of the cocycle $\gamma$.

It remains to show the following:
 
\begin{lemma}\label{lem-Busemann} The cocycle $\gamma(x,y)=\mu_y - \mu_x$ in $W_0$ is proper with respect to $\lVert\,\,\rVert$ in a sense that
\[
\lVert \gamma(x, y)\rVert\to +\infty \,\,\, \text{as} \,\,\, d_Z(x,y)\to +\infty.
\]
\end{lemma}

\begin{proof} By double transitivity, for pairs of points with same distance, of the $G$-action on $Z=G/K$ and by the uniform-boundedness of the representation $\pi_0$, it suffices to show that 
\[
\lim_{t\to \infty}\lVert\gamma_{0, a_t0}\rVert_{W_0} = \lim_{t\to \infty}\lVert(1+\Delta_E)^{\frac{r}{4}-\frac12}d_E\gamma_{0, a_t0}\rVert_{\Gamma_{L^2}(E^\ast)} \to +\infty.
\]
By the formula \eqref{eq_formula}, we have
\[
\gamma_{0, a_t0}(z)=  \log \bigg{|} \frac{1-z_n\tanh t}{(1-\tanh^2 t)^{1/2}} \bigg{|} \equiv  \log \bigg{|} {1-z_n\tanh t} \bigg{|}  
\]
in $C^\infty(G/P)/\mathbb{C}1_{G/P}$ for $z=[1, z_1, \cdots, z_n]^T$ in $G/P$. On a small neighborhood $U_o$ of the point $o=[1, 0, \cdots, 0, 1]^T$ of $G/P$, via the Cayley transform $\mathcal{C}$ from $V$ to $G/P$, the norm 
\[
\lVert(1+\Delta_E)^{\frac{r}{4}-\frac12}d_Ew\rVert_{\Gamma_{L^2}(E^\ast)}
\]
for $w$ in $C^\infty_c(U_0)$ is equivalent to the norm
\[
\lVert(1+\Delta_{\mathfrak{o}})^{\frac{r}{4}}w\circ \mathcal{C}\rVert_{L^2(V, d\mu_V)}. 
\]
To show the properness, it is enough to show the following claim: the $L^2$-norm of 
\[
\Delta_{\mathfrak{o}}^{\frac{r}{4}} \bigg{(} \bigg{(} \log \bigg{|} {1-z_n\tanh t} \bigg{|} \bigg{)} \circ \mathcal{C} \bigg{)}
\]
on any small neighborhood of $0$ in $V$ goes to infinity as $t$ goes to infinity. We have
\begin{align*}
& \bigg{(} \log \bigg{|} {1-z_n\tanh t} \bigg{|} \bigg{)} \circ \mathcal{C} \\
= &  \log \bigg{|} 1-\tanh t(1-x^*x/2+y/2)(1+x^*x/2-y/2)^{-1}     \bigg{|} \\
=& \log \bigg{|} (1+x^*x/2-y/2)  -\tanh t\bigg{(} {1-x^*x/2+y/2}\bigg{)}     \bigg{|} - \log \bigg{|} (1+x^*x/2-y/2)\bigg{|}  
\end{align*}
for $(x, y)$ in $V$. The second term does not depend on $t$ and is locally square-integrable at $0$, so we can safely ignore. As $t$ goes to infinity, the first term converges smoothly, except at the point $0$, to the function
\[
\log \bigg{|} (1+x^*x/2-y/2) - \bigg{(} {1-x^*x/2+y/2}\bigg{)}     \bigg{|} = \log \bigg{|} x^*x-y    \bigg{|} 
\]
The function $\Delta_{\mathfrak{o}}^{\frac{r}{4}}  \log \bigg{|} x^*x-y    \bigg{|} $ is homogeneous of degree $-r/2$ and as in Lemma \ref{lem_locint}, such a function is not locally square-integrable at the origin $0$. By Fatou's Lemma, our claim follows so we are done.
\end{proof}
\end{proof}

Alternatively, we may consider the derivative $d_E\gamma_{x,y}$ of the Busemann cocycle in $\Gamma(E^*)$. The previous lemma is equivalent to that this cocycle is proper with respect to the norm $\lVert\,\,\rVert$ given in Theorem \ref{thm_so}, \ref{thm_sp} for which the natural representation $\pi_{E^\ast}$ of $G$ is continuous and uniformly bounded. On the other hand, we note that $\Gamma(E^*)$ has a natural $L^p$-norm for $p=r$, for which the representation $\pi_{E^\ast}$ is isometric (see Proposition \ref{prop_lp} and the proof of Theorem \ref{thm_so}, \ref{thm_sp}).  The (derivative of) Busemann cocycle $\gamma$ is a proper cocycle for a $L^p$-representation as well:

\begin{theorem}\label{thm-Busemann}  Let $G$ be any one of $\SO_0(n, 1)$ $n\geq2$, $\SU(n,1)$ for $n\geq2$, and $\Sp(n,1)$ for $n\geq2$. Let $\pi_{E^\ast}$ be the natural isometric representation of $G$ on $\Gamma_{L^p}(E^\ast)$ for $p=r$. Then, the derivative $d_E\gamma$ of the Busemann cocycle in $\pi_{E^\ast}$ is continuous and proper. In particular, any closed subgroup of simple real-rank-one Lie groups $\SO_0(n, 1)$, $\SU(n,1)$ and $\Sp(n,1)$ admits a metrically proper, isometric affine action on a $L^p$-space $\Gamma_{L^p}(E^\ast)$ for $p=r$.
\end{theorem}
\begin{proof}
It suffices to show the properness:
\[
\lVert d_E\gamma_{x,y}\rVert_{\Gamma_{L^r}(E^\ast)} \to +\infty \,\,\, \text{as} \,\,\, d_Z(x,y)\to +\infty.
\]
This can be proven in the same way as in the proof of Lemma \ref{lem-Busemann}. Using the Cayley transform, it boils down to showing that $d_{\mathfrak{o}}\log \bigg{|} x^*x-y    \bigg{|}$ on $V$ is not locally $L^r$-integrable at the origin $0$ where 
\[
d_{\mathfrak{o}}=\,\,\, \mid_{\mathfrak{o}} \circ d \colon C^\infty(V)\to \Omega^1(V) \to \Gamma(E^*_{\mathfrak{o}})
\]
is the composition of the de-Rham differential $d$ and the restriction of one-forms defined on $TV$ to the subbundle $E_{\mathfrak{o}}$ corresponding to the left-invariant vector field on $V$ defined by $\mathfrak{o}\subset \underline V$. The assertion follows since $d_{\mathfrak{o}}\log \bigg{|} x^*x-y    \bigg{|}$ is homogeneous of degree $-1$ so it is not locally $L^r$-integrable at the origin $0$ as in Lemma \ref{lem_locint}.

\end{proof}


\bibliographystyle{alpha}
\bibliography{Refs}

\end{document}